\newcommand{\pos}{\mathbb{R}_+} 
\newcommand{\ca}{\mathcal{A}}
\newcommand{\cx}{\mathcal{X}}
\newcommand{\ct}{\mathcal{T}}
\newcommand{\cu}{\mathcal{U}}
\newcommand{\cp}{\mathcal{P}}
\newcommand{\cd}{\mathcal{D}}
\newcommand{\ch}{\mathcal{H}}
\newcommand{\crl}{\mathcal{R}_\lambda}
\newcommand{\ttt}{(\mathcal{T}(t))_{t\geq 0}}
\newcommand{\doX}{\partial\Omega} 
\newcommand{\dx}{\partial X}
\newcommand{\limn}{\lim\limits_{n\rightarrow\infty}}
\newcommand{\myVert}{\Vert} 
\theoremstyle{definition}
\newtheorem{definition}{Definition}[section]
\newtheorem{example}[definition]{Example}
\newtheorem{remark}[definition]{Remark}
\newtheorem{program}[definition]{Program}
\newtheorem{assumption}[definition]{Assumption}
\theoremstyle{plain}
\newtheorem{proposition}[definition]{Proposition}
\newtheorem{lemma}[definition]{Lemma}
\newtheorem{theorem}[definition]{Theorem}
\newtheorem{corollary}[definition]{Corollary}
\numberwithin{equation}{section}
\begin{document}

\title{SEMIGROUPS FOR INITIAL--BOUNDARY VALUE PROBLEMS}

 \author{Marjeta Kramar}
\address{Marjeta Kramar\\
University of Ljubljana\\
Faculty of Civil and Geodetic Engineering\\
Department for Mathematics and Physics\\
Jamova 2\\
1000 Ljubljana, Slovenia}
\email{mkramar@fgg.uni-lj.si}

 \author{Delio Mugnolo}
\address{Delio Mugnolo\\
Arbeitsbereich Funktionalanalysis\\
Mathematisches Institut\\
Universität Tübingen\\
Auf Der Morgenstelle 10\\
D- 72076 Tübingen, Germany}
\email{demu@fa.uni-tuebingen. de}

 \author{Rainer Nagel}
\address{Rainer Nagel\\
Arbeitsbereich Funktionalanalysis\\
Mathematisches Institut\\
Universität Tübingen\\
Auf Der Morgenstelle 10\\
D- 72076 Tübingen, Germany}
\email{rana@fa.uni-tuebingen.de}


\subjclass[2000]{Primary 47D06; Secondary 35K99, 47B65}

\keywords{Operator matrices, $C_0$-semigroups, spectral theory, initial--boundary value problems}

\thanks{We thank K.-J. Engel for many helpful comments. \\
  The second author is supported by the Istituto Nazionale di Alta Matematica ``Francesco Severi''.\\[3pt]
  This article  was originally published in: M.\ Iannelli, G.\ Lumer (eds), Evolution Equations: Applications to Physics, Industry, Life Sciences and Economics. Progress in Nonlinear Differential Equations and Their Applications, vol 55. Birkhäuser, Basel. 2003, 275--292.
  }

\maketitle

\section{SEMIGROUPS EVERYWHERE}
\label{sec:1}

In the foreword of what later became the classic in semigroup theory, E. Hille [Hi48] wrote in 1948:

\textit{``The analytical theory of semi-groups is a recent addition to the ever-growing list of mathematical disciplines. It was my good fortune to take an early interest in this discipline and to see it reach maturity. It has been a pleasant association: I hail a semi-group when I see one and}
\begin{center}
\textbf{I seem to see them everywhere}\textit{!}''
\end{center}

With this quite general (and provocative) statement he probably expressed his conviction that behind every parabolic or hyperbolic (linear and autonomous) partial differential equation there is a \textit{semigroup} yielding its solutions. In the meantime this has been confirmed in many ways and is presented in excellent form, e.g. in \cite{[Go85]} or \cite{[Pa83]}. We briefly recall one of the standard examples.

\begin{example}\label{exa:1.1}
Let $\Omega$ be a bounded open domain in $\mathbb{R}^n$ with smooth boundary $\partial\Omega$. The heat equation with Dirichlet boundary conditions is
\begin{equation} \tag{HE}\label{eq:HE}
\left\{
\begin{split}
 {\partial\over\partial t}u(t,x)&=\sum_{i=1}^n{\partial^2
\over\partial x_i^2}u(t,x),\qquad &&t\geq0,\thinspace
x=(x_1,\ldots,x_n)\in\Omega,\\
 u(t,z)&=0, &&t\geq0,\thinspace z\in\doX,\\
 u(0,x)&=f(x), &&x\in\Omega.
\end{split}
\right.
\end{equation}

If we now choose an appropriate function space, e.g., $X:=L^2(\Omega)$
and define an operator
$$Au:=\Delta u:=\sum_{i=1}^n{\partial^2\over\partial x_i^2}u\quad\hbox{with domain}\quad D(A):=H^2(\Omega)\cap H^1_0(\Omega),$$
the system \eqref{eq:HE} can then be re-written as an abstract Cauchy problem
\begin{equation} \tag{ACP}\label{eq:ACP}
\left\{
\begin{split}
 \dot{u}(t)&=Au(t), \qquad &&t\geq0,\\
 u(0)&=f, &&x\in\Omega.
\end{split}
\right.
\end{equation}

It is well-known (see, e.g., \cite[Thm.~7.2.7]{[Pa83]}) that $(A,D(A))$ 
generates a strongly continuous (and analytic) semigroup
$(T(t))_{{t\geq 0}}$
on $X$ such that
$$u(t,x):=T(t)f(x),\qquad t\geq0,\thinspace x\in\Omega,$$
are the (mild) solutions of \eqref{eq:HE}.
\end{example}
Once a partial differential equation is solved in this way by a
semigroup, there exist by now powerful tools to describe the
qualitative behavior of the solutions. We only mention the spectral
theory for semigroups (see \cite[Chap.~IV]{[EN00]}) yielding various
Liapunov-type stability theorems or the recent results on maximal
regularity of the inhomogeneous version of \eqref{eq:ACP} (see, e.g.,
\cite{[DHP01]}, \cite{[We01]}, or \cite{[We01b]}).

On the other hand, even when one admits the conceptual lucidity and
the generality of the semigroup approach, scepticism may remain, and
Hille continues his above quote:

\textit{``Friends have observed, however, that there are mathematical objects which are not semi-groups.''}

Indeed, already for our heat equation \eqref{eq:HE} semigroups seem to be inappropriate as soon as we add an inhomogeneous term on the boundary.

\begin{example}\label{exa:1.2.}
For the heat equation on $\Omega$ as in Example~1.1, 
we consider inhomogeneous boundary values given by
$u(t,z)=\phi(t,z)$ for $t\geq 0$, $z\in\doX$. As we will see
below, it is more convenient to write this in the form
\begin{equation}\tag{iHE} \label{eq:iHE}
\left\lbrace\begin{aligned}
 {\partial\over\partial t}{u}(t,x)&=\sum_{i=1}^n{\partial^2
\over\partial x_i^2}u(t,x),
\qquad t\geq0,\thinspace x=(x_1,\ldots,x_n)\in\Omega,\\
 {\partial\over\partial t}u(t,z)&=\psi(t,z),
\mkern 98mu t\geq0,\thinspace z\in\partial\Omega,\\
 u(0,x)&=f(x),\mkern111mu x\in\Omega,\\
u(0,z)&=g(z),\mkern113mu z\in\doX,
\end{aligned}\right.
\end{equation}
where $\psi(t,z):={\partial\over\partial t}\phi(t,z)$.\qed
\end{example}

Such a situation typically occurs in all kinds of boundary control
problems (see, e.g., \cite{[LT00]}). The obstacle to use semigroups consists
in the fact that the inhomogeneous term
$$t\mapsto\psi(t)$$
does not map into the state space chosen in Example~1.1. However, as for many other types of equations (e.g., delay equations, integro-differential equations) initially not having the form \eqref{eq:ACP}, it is possible to extend the state space and then write \eqref{eq:iHE} as an inhomogeneous abstract Cauchy problem:
take $X:=L^2(\Omega)$, $\dx:=L^2(\doX)$, and $A:=\Delta$ the
Laplacian with appropriate domain (see \S~3 below for details).
On the product space
$$\cx:=X\times\dx,$$
we define a new operator
\begin{equation}\label{eq:1.1}
\ca:=\begin{pmatrix}A&0\\ 0&0\end{pmatrix},\qquad
D(\ca):=\left\{{u\choose v}\in D(A)\times\dx\:u\big|_{\doX}=
v\right\}.
\end{equation}

Then \eqref{eq:iHE} becomes
\begin{equation}\tag{iACP} \label{eq:iACP}
\left\lbrace\begin{aligned}
 &\dot{\cu}(t)=\ca\;\cu(t)+{0\choose{\psi}(t)},\qquad
t\geq0,\\
 &\cu(0)=\cu_0,
\end{aligned}\right.
\end{equation}
for a function $\pos\ni t\mapsto\cu(t)\in\cx$ and with the initial
data $\cu_0:={f\choose g}$. Hence, it becomes an inhomogeneous abstract Cauchy problem and therefore semigroups should be applicable in this situation.

\begin{remark}\label{rem:1.3.}
The idea to use a product space in order to convert inhomogeneous boundary conditions into the form \eqref{eq:iACP} appears and is used systematically by Arendt in \cite{[Ar00]}, see also \cite[Chap.~6]{[ABHN01]}.
\end{remark}

This important example, and the quotation of Hille's cited at the
beginning, suggest the following.

\begin{program}\label{prog:1.4.}$ $\\[-10pt]
\begin{enumerate}[label=(\roman*)]
\item Define an abstract setting for the situation described in Example~\ref{exa:1.2.}.
\item Show that well-posedness becomes equivalent to the well-posedness of the corresponding \eqref{eq:ACP}.
\item Apply semigroup theory to obtain existence of the solutions.
\end{enumerate}
\end{program}

In the following, we propose an appropriate abstract setting, develop
this program, and discuss one typical application.

\section{ABSTRACT INITIAL--BOUNDARY VALUE PROBLEMS}
\label{sec:2}

Let $X$ and $\dx$ be Banach spaces called \textit{state space} and \textit{boundary space}, respectively. We denote by ${\cx}:=X\times\dx$ their product space, and by $\pi_1$ and $\pi_2$ the projections onto $X$ and $\dx$, respectively.

\begin{assumption}\label{ass:2.1.}
We consider the following (linear) operators.
\begin{itemize}
\item $A:D(A)\subseteq X\rightarrow X$, called \textit{maximal operator}.
\item $L:D(A)\rightarrow\partial X$, called \textit{boundary operator}.
\item $B:D(B)\subseteq X\rightarrow\dx$, called \textit{feedback operator}.
\end{itemize}
\end{assumption}

For these operators, we assume that $D(A)\subseteq D(B)$ and
consider what we call an \textit{abstract initial--boundary value
problem on the state space $X$ and the boundary space $\dx$}:
\begin{equation}\tag{AIBVP} \label{eq:AIBVP}
\left\lbrace\begin{aligned}
 {d\over dt}{u}(t)&=Au(t),\qquad t\geq 0,\\
 {d\over dt}Lu(t)&=Bu(t),\qquad t\geq 0,\\
  u(0)&=f,\\
 Lu(0)&=g,
\end{aligned}\right.
\end{equation}
where $f\in X$, $g\in\dx$. (If we want to emphasize the
dependence on the initial data $f$, $g$, we write $\mathrm{(AIBVP)}_{f,g}$).

We now make precise what we understand by a solution of \eqref{eq:AIBVP}.

\begin{definition}\label{def:2.2}
Let $(f,g)\in D(A)\times \partial X$ be given initial data. A \textbf{classical solution} to $\mathrm{(AIBVP)}_{f,g}$ is a function $u$ such that
\begin{enumerate}[label=\rm{(CS$_{\arabic*}$)}]
\item $u\in C^1(\pos,X)$ and $u(t)\in D(A)$ for all $t\geq 0$,
\item $Lu\in C^1(\pos,\dx)$,
\item $u$ satisfies $\mathrm{(AIBVP)}_{f,g}$.
\end{enumerate}
Moreover, $\mathrm{(AIBVP)}$ is called \textbf{ well-posed} if
\begin{enumerate}[label=\rm{(WP$_{\arabic*}$)}, itemsep=2pt, parsep=0pt]
\item $\overline{D(A)}=X$,
\item $\mathrm{(AIBVP)}_{f,g}$ admits a unique classical solution $u(\cdot,f,g)$ for all $(f,g)\in D(A)\times\dx$ satisfying $Lf=g$,
\item for all sequences of initial data $(f_n,g_n)_{n\in\mathbb{N}}\subseteq D(A)\times \dx$ tending to $0$ and such that $Lf_n=g_n$, one has $\limn u(t,f_n,g_n)=0$ and $\limn Lu(t,f_n,g_n)=0$ uniformly for $t$ on compact intervals.
\end{enumerate}
A \textbf{mild solution} to $\mathrm{(AIBVP)}_{f,g}$ for arbitrary $(f,g)\in\mathcal X$ is a function $u$ such that
\begin{enumerate}[label=\rm{(MS$_{\arabic*}$)}, itemsep=2pt, parsep=0pt]
\item $u\in C(\pos,X)$ and $\int_0^tu(s)ds\in D(A)$ for all $t\geq0$,
\item $B\left(\int_0^\cdot u(s)ds\right)\in C(\pos,\dx)$,
\item $u$ satisfies the integrated problem
$$\left\lbrace\begin{aligned}u(t)&=f+A\int_0^t\!\!\!u(s)ds,\mkern114mu
t\geq0,\\
L\int_0^t\!\!\!u(s)ds
&=tg+\int_0^t\!\!\!B\left(\int_0^s\!\!\!u(r)\;dr\right)ds
,\qquad t\geq0.
\end{aligned}\right.
$$
\end{enumerate}
\end{definition}

In order to treat \eqref{eq:AIBVP} using semigroups we consider the
operator matrices on $\cx$ given by
\begin{equation}\label{eq:2.1}
\ca:=\begin{pmatrix}A&0\\ 0&0\end{pmatrix}\qquad{\rm with\enspace
domain}\qquad D({\ca}):=\left\{{u\choose v}\in D(A)\times\dx\:
Lu=v\right\},
\end{equation}
and
\begin{equation}\label{eq:2.2}
\tilde\ca:=\begin{pmatrix}A&0\\ B&0\end{pmatrix}\qquad{\rm with\enspace
domain}\qquad D(\tilde\ca):=D(\ca).
\end{equation}
We briefly state some of their properties.

\begin{lemma}\label{lem:2.3.}
Let $B$ be relatively $A$-bounded. Then the operator $\tilde\ca$ is closed if and only if the operator $\binom{A}{L}:D(A)\subseteq X\rightarrow X\times\dx$ is closed.
\end{lemma}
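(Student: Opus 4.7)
The plan is to unfold the definition of closedness on both sides and exploit the fact that relative $A$-boundedness makes $B$ continuous in the graph norm of $A$. Since $D(\ca)=\{\binom{u}{v}\in D(A)\times\dx:Lu=v\}$, a sequence $\binom{u_n}{v_n}\in D(\tilde\ca)$ is just parametrized by $u_n\in D(A)$ together with $v_n=Lu_n$, and $\tilde\ca\binom{u_n}{v_n}=\binom{Au_n}{Bu_n}$. So closedness of $\tilde\ca$ is a statement about convergence of quadruples $(u_n,Lu_n,Au_n,Bu_n)$, while closedness of $\binom{A}{L}$ is a statement about triples $(u_n,Au_n,Lu_n)$. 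The relative $A$-boundedness estimate $\|Bx\|\leq a\|Ax\|+b\|x\|$ for $x\in D(A)$ will let me move freely between these.

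For the direction ``$\binom{A}{L}$ closed $\Rightarrow\tilde\ca$ closed'', I take $\binom{u_n}{v_n}\in D(\tilde\ca)$ with $\binom{u_n}{v_n}\to\binom{u}{v}$ and $\binom{Au_n}{Bu_n}\to\binom{y}{w}$. From $v_n=Lu_n\to v$, the triple $(u_n,Au_n,Lu_n)$ converges to $(u,y,v)$, and closedness of $\binom{A}{L}$ gives $u\in D(A)$, $Au=y$, $Lu=v$, i.e.\ $\binom{u}{v}\in D(\tilde\ca)$. To identify $w=Bu$, I use that $u_n\to u$ in the graph norm of $A$, so the relative $A$-bound on $B$ gives $\|Bu_n-Bu\|\leq a\|Au_n-Au\|+b\|u_n-u\|\to 0$, hence $Bu=w$.

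For the converse ``$\tilde\ca$ closed $\Rightarrow\binom{A}{L}$ closed'', I take $u_n\in D(A)$ with $u_n\to u$, $Au_n\to y$, $Lu_n\to v$. I lift this to $D(\tilde\ca)$ by setting $v_n:=Lu_n$, so $\binom{u_n}{v_n}\to\binom{u}{v}$ in $\cx$. The key remaining point is that $\tilde\ca\binom{u_n}{v_n}=\binom{Au_n}{Bu_n}$ must converge, and here the relative $A$-boundedness does the work again: the estimate $\|Bu_n-Bu_m\|\leq a\|Au_n-Au_m\|+b\|u_n-u_m\|$ shows $(Bu_n)$ is Cauchy, hence converges to some $w\in\dx$. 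Closedness of $\tilde\ca$ then yields $\binom{u}{v}\in D(\tilde\ca)$ with $\tilde\ca\binom{u}{v}=\binom{y}{w}$, i.e.\ $u\in D(A)$, $Au=y$, $Lu=v$, which is exactly what is needed for closedness of $\binom{A}{L}$.

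There is no real obstacle: both directions reduce to the same soft observation, namely that relative $A$-boundedness forces $Bu_n$ to inherit convergence (or Cauchyness) from $u_n$ and $Au_n$. The only point demanding a bit of care is making sure that in the first direction one applies the $A$-bound \emph{after} establishing $u\in D(A)$, so that $\|Bu_n-Bu\|$ is well-defined.
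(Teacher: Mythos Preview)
Your proof is correct and follows essentially the same approach as the paper's own proof: both directions proceed by translating sequences between $D(\tilde\ca)$ and $D(A)$ via $v_n=Lu_n$, and both use relative $A$-boundedness of $B$ exactly where you do (to identify the limit of $Bu_n$ in the forward direction, and to force $(Bu_n)$ to be Cauchy in the converse). Your write-up is in fact slightly more explicit than the paper's, which simply asserts ``It follows that $\{Bu_n\}$ converges to some $z$'' without spelling out the Cauchy estimate.
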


\begin{proof}
Assume first that $\binom{A}{L}$ is a closed operator. Let
$$\left\{{u_n\choose v_n}\right\}_{n\in\mathbb{N}}\subseteq D({\ca}),\quad
\lim_{n\rightarrow\infty}{u_n\choose v_n}={u\choose v}, \quad{\rm
and}\quad\lim_{n\rightarrow\infty}\tilde\ca{u_n\choose v_n}=\limn
{Au_n\choose Bu_n} \negthinspace=\negthinspace{w\choose z}$$ for
some $u,w\in X$ and $v,z\in\partial X$. Since
$$\limn \binom{A}{L}u_n\negthinspace=
\negthinspace\limn \binom{Au_n}{v_n}\negthinspace=
\negthinspace{w\choose v}$$
and $\binom{A}{L}$ is closed, we obtain
$u\in D(A)$ and $Lu=v$, thus showing
that ${u\choose v}\in D({\tilde\ca})$. Furthermore, $Au=w$. Finally,
the continuity of $B$ with respect to the graph norm of $A$ implies
that $z=\limn Bu_n=Bu$, and hence $\tilde\ca{u\choose v}={w\choose
z}$.

Assume now $\tilde\ca$ to be closed. Let
$$\{u_n\}_{n\in\mathbb{N}}\subseteq D(A),\quad
\lim_{n\rightarrow\infty}u_n=u,\quad{\rm
and}\quad\lim_{n\rightarrow\infty} \binom{A}{L}u_n=\limn
\binom{Au_n}{Lu_n} \negthinspace=\negthinspace{w\choose v}$$ for
some $u,w\in X$, $v\in\dx$. It follows that $\{Bu_n\}_{n\in\mathbb{N}}$
converges to some $z\in\dx$, and that
$${u_n\choose Lu_n}_{n\in\mathbb{N}}\mkern-30mu\subseteq D(\ca),
\quad\lim_{n\rightarrow\infty}{u_n \choose Lu_n}={u\choose v},
\quad\lim_{n\rightarrow\infty}\tilde\ca{u_n
\choose Lu_n}=\lim_{n\rightarrow\infty}{Au_n\choose Bu_n}={w\choose
z},$$
and hence, due to closedness of $\tilde\ca$,
$${u\choose v}\in D(\ca)\quad{\rm and}\quad\tilde\ca{u\choose
v}={Au\choose Bu}={w\choose z},$$
i.e., $u\in D(A)$, $Lu=v$, and $Au=w$.
\end{proof}

\begin{lemma}\label{lem:2.4.}
Assume that $L$ is surjective and $\ker(L)$ is dense in $X$. Then the operator $\tilde\ca$ is densely defined.
\end{lemma}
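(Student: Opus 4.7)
The goal is to show that for an arbitrary pair $(f,g)\in\cx = X\times\dx$, we can find a sequence in $D(\tilde\ca) = D(\ca) = \{(u,v)\in D(A)\times\dx\thinspacecolon Lu=v\}$ converging to $(f,g)$. My plan is to split the approximation into two independent tasks: realizing the boundary component exactly by a single element, and then approximating the first coordinate using vectors that leave the boundary component unchanged.

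First I would use surjectivity of $L$ to pick a single $u_0\in D(A)$ with $Lu_0 = g$. This immediately gives one element $\binom{u_0}{g}\in D(\ca)$ whose second coordinate is already the target $g$. So it only remains to correct the first coordinate from $u_0$ to $f$ \emph{without} disturbing the boundary component.

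Next I would invoke density of $\ker(L)$ in $X$ to choose a sequence $(w_n)\subseteq \ker(L)$ with $w_n\to f-u_0$ in $X$. Since $\ker(L)\subseteq D(A)$, the vectors $u_n := u_0+w_n$ lie in $D(A)$, and $Lu_n = Lu_0+Lw_n = g+0 = g$. Therefore $\binom{u_n}{g}\in D(\ca)=D(\tilde\ca)$ for every $n$, and
\begin{equation*}
\binom{u_n}{g}=\binom{u_0+w_n}{g}\;\xrightarrow[n\to\infty]{}\;\binom{f}{g}\quad\text{in }\cx,
\end{equation*}
which establishes that $D(\tilde\ca)$ is dense in $\cx$.

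There is really no hard step here: the construction is purely algebraic once one notices that $D(\ca)$ is closed under adding elements of $\ker(L)\times\{0\}$, so any single preimage of $g$ under $L$ suffices to reduce the density question to density of $\ker(L)$ in $X$. Neither closedness of $A$ nor relative boundedness of $B$ enters the argument. The only subtlety worth flagging is that surjectivity of $L$ is used merely to guarantee existence of one preimage of $g$; no bounded right-inverse of $L$ is required.
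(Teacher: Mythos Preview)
Your proof is correct and follows essentially the same idea as the paper's: use surjectivity of $L$ to hit the boundary component exactly by some $u_0\in D(A)$, then add an element of $\ker(L)$ to approximate the first coordinate. The paper carries this out via an $\varepsilon$-argument (approximating $u$ and $u_0$ separately by elements of $\ker(L)$, yielding a $2\varepsilon$ bound), whereas your sequence formulation approximates $f-u_0$ directly and is marginally cleaner, but the underlying argument is the same.
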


\begin{proof}
Let $u\in X$, $v\in\partial X$, $\varepsilon>0$. Surjectivity of $L$ ensures that there exists $w\in D(A)$ such that $Lw=v$. Take $\tilde u,\tilde w\in\ker(L)$ such that $\myVert u-\tilde u\myVert_{_X}<\varepsilon$ and $\myVert w-\tilde w\myVert_{_X}<\varepsilon$. Let $z:=\tilde u+w-\tilde w\in D(A)$. Then
$$\bigg\myVert{u\choose v}-{z\choose v}\bigg\myVert
\le\bigg\myVert{u-\tilde u\choose0}\bigg\myVert+
\bigg\myVert{w-\tilde
w\choose0}\bigg\myVert\le2\varepsilon.$$
Since $L(z)=L(w)=v$, we obtain ${z\choose v}\in D(\tilde\ca) $.
\end{proof}

For the operator $\tilde\ca$, we consider the abstract Cauchy problem
on the product space $\cx$
\begin{equation}\tag{ACP$_\mathbf{x}$} \label{eq:ACP-bx}
\left\lbrace\begin{aligned}
\dot{\cu}(t)&=\tilde\ca\;\cu (t),\qquad
t\geq0,\\
{\cu}(0)&=\mathbf{x},
\end{aligned}\right.
\end{equation}
with initial data $\mathbf{x}\in\cx$.

As usual (cf. \cite[Def.~II.6.1]{[EN00]}), for $\mathbf{x}\in D(\tilde\ca)$ a
function $\cu$ is called \textbf{classical solution} of the abstract
Cauchy problem \eqref{eq:ACP-bx} if 
\begin{itemize}
\item $\cu\in C^1(\pos,\cx)$,
\item $\cu(t)\in D(\tilde\ca)$ for all $t\geq0$,
\item $\cu$ satisfies $\mathrm{(ACP)}_\mathbf{x}$.
\end{itemize}
As in \cite[Def.~II.6.8]{[EN00]}, the abstract Cauchy problem
$\mathrm{(ACP)}$ associated to the closed operator $\tilde\ca$ is called \textbf{well-posed} if
\begin{itemize}
\item $D({\tilde\ca})$ is dense in $\cx$,
\item $\mathrm{(ACP)}_\mathbf{x}$ admits a unique classical solution $\cu (\cdot,\mathbf{x})$ for all $\mathbf{x}\in D({\tilde\ca})$,
\item for every sequence of initial data $(\mathbf{x}_n)_{n\in\mathbb{N}}\subseteq D(\tilde\ca)$ tending to $0$ there holds $\limn {\cu}(t,\mathbf{x}_n)=0$ uniformly in $t$ on compact intervals.
\end{itemize}
Further (cf. \cite[Def.~II.6.3]{[EN00]}), for
arbitrary $\mathbf{x}\in\cx$ a function $\cu$ is called \textbf{mild
solution} of $\mathrm{(ACP)}_\mathbf{x}$ if
\begin{itemize}
\item $\cu\in C(\pos,\cx)$,
\item $\int_0^t\cu(s)\;ds\in D(\tilde\ca)$ for all $t\geq0$,
\item $\cu$ satisfies the integrated problem
$$\cu(t)=\mathbf{x}+\tilde\ca\int_0^t\!\!
\cu(s)\;ds,\qquad t\geq0.\eqno{\rm
(IP)}$$
\end{itemize}

We now apply the semigroup characterization of well-posedness of
abstract Cauchy problems (see \cite[Prop.~II.6.2, Prop.~II.6.4, Cor.~II.6.9]{[EN00]}).

\begin{proposition}\label{prop:2.5.}
Let $(\tilde\ca,D(\tilde\ca))$ be a closed operator on a Banach space $\cx$. Then \eqref{eq:ACP} is well-posed if and only if $\tilde\ca$ generates a strongly continuous semigroup. Moreover, if $(\tilde\ca,D(\tilde\ca))$ generates a strongly continuous semigroup $(\tilde\ct(t))_{t\geq0}$, then the function
$$\pos\ni t \mapsto \cu(t) := \tilde\ct(t)\mathbf{x}\in\cx\qquad\qquad
\hbox{for}\quad\mathbf{x}\in D(\tilde\ca)\quad(\mathbf{x}\in\cx,\ 
\hbox{respectively})$$
is the unique classical (mild, respectively) solution to
\eqref{eq:ACP-bx}.
\end{proposition}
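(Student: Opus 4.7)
The plan is to recognize that this proposition is essentially a restatement, for the specific operator $\tilde\ca$ on $\cx$, of the classical semigroup characterization of well-posedness of the abstract Cauchy problem. Since no structural properties of $\tilde\ca$ (beyond closedness) are used, the proof should amount to verifying that the definitions of classical solution, mild solution, and well-posedness given just above for $\mathrm{(ACP)}_\mathbf{x}$ coincide verbatim with those of \cite[Def.~II.6.1, Def.~II.6.3, Def.~II.6.8]{[EN00]}, and then citing the results proved there.

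Concretely, I would argue as follows. For the equivalence in the first statement, \cite[Prop.~II.6.2]{[EN00]} shows that if $\mathrm{(ACP)}$ is well-posed for a closed operator, then that operator generates a $C_0$-semigroup; the converse is the content of \cite[Cor.~II.6.9]{[EN00]}. Applied to $(\tilde\ca, D(\tilde\ca))$, these give exactly the stated equivalence. For the representation formula, if $\tilde\ca$ generates $(\tilde\ct(t))_{t\geq 0}$, then the standard invariance $\tilde\ct(t)D(\tilde\ca)\subseteq D(\tilde\ca)$ together with the differentiation formula $\tfrac{d}{dt}\tilde\ct(t)\mathbf{x} = \tilde\ca\tilde\ct(t)\mathbf{x}$ for $\mathbf{x}\in D(\tilde\ca)$ shows that $\cu(t):=\tilde\ct(t)\mathbf{x}$ is a classical solution. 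Uniqueness is the familiar argument: for any other classical solution $\tilde\cu$, the map $s\mapsto\tilde\ct(t-s)\tilde\cu(s)$ is differentiable with zero derivative on $[0,t]$, forcing $\tilde\cu(t)=\tilde\ct(t)\mathbf{x}$.

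For the mild solution case with arbitrary $\mathbf{x}\in\cx$, \cite[Prop.~II.6.4]{[EN00]} asserts that the orbit $t\mapsto\tilde\ct(t)\mathbf{x}$ is continuous, that $\int_0^t\tilde\ct(s)\mathbf{x}\,ds \in D(\tilde\ca)$ for every $t\geq 0$, and that the integrated problem
\begin{equation*}
\tilde\ct(t)\mathbf{x} \;=\; \mathbf{x} + \tilde\ca\int_0^t\tilde\ct(s)\mathbf{x}\,ds, \qquad t\geq 0,
\end{equation*}
holds; this is exactly (IP) in our setting, and the argument there also yields uniqueness of mild solutions via a Laplace transform / resolvent computation. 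Each of these ingredients transfers without modification to our operator matrix $\tilde\ca$ on the product Banach space $\cx$.

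There is no serious obstacle here: the only thing to check is that the notions of classical solution, mild solution, and well-posedness set up in the preceding definitions match the ones used in \cite{[EN00]}, which they do line-for-line. The genuine work of the paper lies not in this proposition, but in the subsequent task of comparing well-posedness of the abstract initial--boundary value problem \eqref{eq:AIBVP} with well-posedness of $\mathrm{(ACP)}_\mathbf{x}$ for $\tilde\ca$, which Proposition~\ref{prop:2.5.} is merely meant to enable.
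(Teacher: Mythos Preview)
Your proposal is correct and matches the paper's approach exactly: the paper does not give a standalone proof of this proposition but simply prefaces it with ``We now apply the semigroup characterization of well-posedness of abstract Cauchy problems (see \cite[Prop.~II.6.2, Prop.~II.6.4, Cor.~II.6.9]{[EN00]}),'' which are precisely the references you invoke. Your additional sketch of the standard uniqueness argument and the integrated identity is harmless elaboration of what those cited results already contain.
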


We now show that well-posedness of \eqref{eq:AIBVP} is equivalent to
well-posedness of the corresponding $\mathrm{(ACP)}$. To that purpose we relate
the (classical and mild) solutions of the two problems.

\begin{lemma}\label{lem:2.6.}
\begin{enumerate}[label=(\roman*), itemsep=2pt, parsep=0pt]
\item If $u$ is a classical solution to $\mathrm{(AIBVP)}_{f,g}$, then ${\cu}:={u\choose Lu}$ is a classical solution to $\mathrm{(ACP)}_{\binom{f}{g}}$.
\item Conversely, if\enspace ${\cu}$ is a classical solution to $\mathrm{(ACP)}_\mathbf{x}$, then $u:=\pi_1{\cu}$ is a classical solution to $\mathrm{(AIBVP)}_{\pi_1(\mathbf{x}),\pi_2(\mathbf{x})}$.
\end{enumerate}
\end{lemma}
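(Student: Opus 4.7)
The plan is to verify each bullet in the definition of classical solution on both sides, using the structure of $D(\tilde\ca)=D(\ca)=\{\binom{u}{v}\in D(A)\times\dx : Lu=v\}$ and the explicit matrix form of $\tilde\ca$ to match derivatives against right-hand sides. Nothing deep is needed: the identification $\cu\leftrightarrow\binom{u}{Lu}$ is exactly what is forced by the boundary-coupling condition in $D(\ca)$, so the two formulations are essentially reformulations of each other.

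For part (i), I would start from a classical solution $u$ of $\mathrm{(AIBVP)}_{f,g}$ and set $\cu(t):=\binom{u(t)}{Lu(t)}$. The regularity hypotheses (CS$_1$) and (CS$_2$) give $\cu\in C^1(\pos,\cx)$ componentwise. Since $u(t)\in D(A)$ and the second coordinate of $\cu(t)$ is exactly $Lu(t)$, the coupling condition $Lu=v$ defining $D(\ca)$ is automatic, hence $\cu(t)\in D(\tilde\ca)=D(\ca)$ for all $t\geq 0$. Differentiating componentwise and applying the two differential equations from \eqref{eq:AIBVP},
\begin{equation*}
\dot\cu(t)=\binom{\dot u(t)}{\frac{d}{dt}Lu(t)}=\binom{Au(t)}{Bu(t)}=\begin{pmatrix}A&0\\B&0\end{pmatrix}\binom{u(t)}{Lu(t)}=\tilde\ca\,\cu(t),
\end{equation*}
and the initial condition $\cu(0)=\binom{f}{g}$ is immediate from $u(0)=f$, $Lu(0)=g$.

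For part (ii), I would reverse the argument. Given a classical solution $\cu$ of $\mathrm{(ACP)}_\mathbf{x}$, set $u:=\pi_1\cu$. Since $\cu(t)\in D(\tilde\ca)=D(\ca)$, by definition of $D(\ca)$ we have $u(t)=\pi_1\cu(t)\in D(A)$ and $\pi_2\cu(t)=Lu(t)$; in particular $Lu=\pi_2\cu\in C^1(\pos,\dx)$, which gives (CS$_1$) and (CS$_2$). The identity $\dot\cu(t)=\tilde\ca\cu(t)=\binom{Au(t)}{Bu(t)}$ projected onto the two factors yields $\dot u(t)=Au(t)$ and $\frac{d}{dt}Lu(t)=\frac{d}{dt}\pi_2\cu(t)=Bu(t)$, i.e.\ the two evolution equations of \eqref{eq:AIBVP}. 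The initial conditions $u(0)=\pi_1(\mathbf{x})$ and $Lu(0)=\pi_2\cu(0)=\pi_2(\mathbf{x})$ are just the components of $\cu(0)=\mathbf{x}$.

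There is no genuine obstacle; the only point worth being careful about is that the condition $Lu(t)=\pi_2\cu(t)$ for \emph{every} $t\geq 0$ (rather than just at $t=0$) is what forces the two formulations to carry the same information, and this is exactly what is guaranteed by the invariance $\cu(t)\in D(\ca)$ built into the notion of classical solution of $\mathrm{(ACP)}$.
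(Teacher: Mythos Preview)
Your proof is correct and follows essentially the same approach as the paper's own proof: both verify each condition in the definition of classical solution directly, using the coupling $Lu(t)=\pi_2\cu(t)$ that comes from $\cu(t)\in D(\tilde\ca)$. Your version is in fact a bit more explicit than the paper's, which simply asserts that the equations are ``fulfilled'' without writing out the matrix computation.
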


\begin{proof}
$(i)$ Let $u$ be a classical solution to $\mathrm{(AIBVP)}_{f,g}$.
It follows that $u\in C^1(\pos,X)$ and $Lu\in C^1(\pos,\dx)$, and
therefore ${\cu}\in C^1(\pos,\cx)$. Moreover, $Lf=g$ holds, hence
$\binom{f}{g}\in D({\tilde\ca})$, and also ${\cu}(t)\in
D({\tilde\ca})$ for all $t\geq0$. Finally, one can see that
$\mathrm{(ACP)}_{\binom{f}{g}}$ is fulfilled.

$(ii)$ Assume now ${u\choose v}:=\cu$ to be a classical solution to
$\mathrm{(ACP)}_\mathbf{x}$. Then ${\cu}(t)\in D(\tilde\ca)$ for all $t\geq 0$
and hence $u(t)\in D(A)$ and $v(t)=Lu(t)$ for all $t\geq 0$. It also
follows from $\cu\in C^1(\pos,\cx)$ that $u\in C^1(\pos,{X})$ and
$Lu\in C^1(\pos,\dx)$. Finally, $\pi_2(\mathbf{x})=v(0)=Lu(0)=L(\pi_1(\mathbf{x}))$,
and $\mathrm{(AIBVP)}_{\pi_1(\mathbf{x}),\pi_2(\mathbf{x})}$ is fulfilled.
\end{proof}

\begin{lemma}\label{lem:2.7.}
\begin{enumerate}[label=(\roman*), itemsep=2pt, parsep=0pt]
\item If $u$ is a mild solution to $\mathrm{(AIBVP)}_{f,g}$, then ${\cu}:={u\choose v}$ is a mild solution to $\mathrm{(ACP)}_{\binom{f}{g}}$, where $v(t):=g+B\left(\int_0^t u(s)ds\right)$, $t\geq0$.
\item Conversely, assume that\enspace ${\cu}$ is a mild solution to $\mathrm{(ACP)}_\mathbf{x}$. Then $u:=\pi_1{\cu}$ is a mild solution to $\mathrm{(AIBVP)}_{\pi_1(\mathbf{x}),\pi_2(\mathbf{x})}$.
\end{enumerate}
\end{lemma}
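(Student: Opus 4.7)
The plan is to exhibit in each direction the map that turns a mild solution of one problem into a mild solution of the other, and then to verify the three defining conditions line by line. The guiding observation is that the second component of the candidate vector $\cu(t)$ is essentially forced: reading off the second coordinate of $(\mathrm{IP})$ for $\tilde\ca$ yields $v(t) = \pi_2(\mathbf{x}) + B\bigl(\int_0^t u(s)\,ds\bigr)$, which is precisely the formula prescribed in the statement for part~(i).

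For (i), I will set $\cu(t) := \binom{u(t)}{v(t)}$ with $v$ defined as in the statement. Continuity of $v$ is exactly (MS$_2$) for the original problem, so $\cu\in C(\pos,\cx)$, and clearly $\cu(0)=\binom{f}{g}$. To see that $\int_0^t \cu(s)\,ds\in D(\tilde\ca)=D(\ca)$, I use that $\int_0^t u(s)\,ds\in D(A)$ by (MS$_1$), and integrate the definition of $v$ to obtain
\[
\int_0^t v(s)\,ds \;=\; tg + \int_0^t B\Bigl(\int_0^s u(r)\,dr\Bigr)\,ds,
\]
which by the second equation of (MS$_3$) coincides with $L\int_0^t u(s)\,ds$; this verifies the boundary condition $L\pi_1 = \pi_2$ required for membership in $D(\ca)$. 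The identity $(\mathrm{IP})$ for $\cu$ then splits coordinatewise: the first line is the first equation of (MS$_3$), and the second line is just the definition of $v$.

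For (ii), conversely, I write $\cu=\binom{u}{v}$ and read off what the mild-solution conditions for $(\mathrm{ACP})_\mathbf{x}$ impose on the components. Membership $\int_0^t\cu(s)\,ds\in D(\tilde\ca)$ gives $\int_0^t u(s)\,ds\in D(A)$ together with $L\int_0^t u(s)\,ds=\int_0^t v(s)\,ds$; the first coordinate of $(\mathrm{IP})$ delivers $u(t)=\pi_1(\mathbf{x})+A\int_0^t u(s)\,ds$; and the second coordinate gives $v(t)=\pi_2(\mathbf{x})+B\int_0^t u(s)\,ds$. This last equation, together with the continuity of $v$ inherited from $\cu\in C(\pos,\cx)$, yields (MS$_2$) for $u$. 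Integrating it and substituting into $L\int_0^t u(s)\,ds=\int_0^t v(s)\,ds$ reproduces the second line of (MS$_3$) with $g=\pi_2(\mathbf{x})$. No serious analytical difficulty is anticipated: the whole argument is a coordinatewise reorganization of the two definitions, and the only step that deserves care is matching the boundary condition $Lw=z$ built into $D(\tilde\ca)$ with the second integrated equation of (MS$_3$), which is exactly what the prescribed choice of $v$ in part~(i) is designed to guarantee.
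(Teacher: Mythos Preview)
Your proof is correct and follows essentially the same coordinatewise verification as the paper's own argument. If anything, your treatment of part~(ii) is slightly more careful: you explicitly derive $v(t)=\pi_2(\mathbf{x})+B\int_0^t u(s)\,ds$ from the second coordinate of $(\mathrm{IP})$ before concluding (MS$_2$), whereas the paper asserts the continuity of $B\bigl(\int_0^\cdot u(s)\,ds\bigr)$ directly from $\cu\in C(\pos,\cx)$ and only afterward records the integrated identity.
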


\begin{proof}
$(i)$ Let $u$ be a mild solution to $\mathrm{(AIBVP)}_{f,g}$. It follows
that $u\in C(\pos, X)$ and $B\left(\int_0^\cdot u(s)\right)ds
\in C(\pos,\dx)$, and therefore $\cu\in C(\pos,\cx)$. Moreover, $u$
fulfills the condition (MS$_3$), and this implies that
$L\int_0^tu(s)ds=\int_0^tv(s)ds$, i.e., $\int_0^t\cu(s)\;ds
\in D(\tilde\ca)$ for all $t\geq0$. Finally, there holds
$$u(t)=f+A\int_0^t\!\!\! u(s)\;ds,\qquad{\rm and}\qquad
v(t)=g+B\int_0^t\!\!\!
u(s)\;ds$$
(respectively, by (MS$_3$) and by definition). These equalities
express that (IP) is satisfied.

$(ii)$ Assume now ${u\choose v}:=\cu$ to be a mild solution to
$\mathrm{(ACP)}_\mathbf{x}$. It follows that $\cu\in C(\pos,\cx)$, and hence $u\in
C(\pos,X)$ and $B\left(\int_0^\cdot u(s)ds\right)\in
C(\pos,\dx)$. Moreover, $\int_0^t \cu(s)ds\in D(\tilde\ca)$ for
all $t\geq0$, and this yields
$$\int_0^t\!\!\!
u(s)\;ds\in D(A)\quad{\rm and}\quad
L\int_0^t\!\!\! u(s)\;ds=\int_0^t\!\!\!
v(s)\;ds=tg+\int_0^t\!\!\!B\left(\int_0^s\!\!\! u(r)\;dr\right)ds,$$
for all $t\geq0$.

Finally, taking the first coordinate of (IP), it follows that
$u(t)=\pi_1(\mathbf{x})+A\int_0^t u(s)ds$.
\end{proof}

\begin{theorem}\label{thm:2.8.}
\begin{enumerate}[label=(\roman*), itemsep=2pt, parsep=0pt]
\item If $({\tilde\ca},D({\tilde\ca}))$ generates a strongly continuous semigroup $(\tilde\ct(t))_{t\geq0}$ on $\cx$, then $\mathrm{(AIBVP)}$ is well-posed. In this case, the unique classical solution to $\mathrm{(AIBVP)}_{f,g}$ is given by $\pi_1{\tilde\ct}(\cdot)\binom{f}{g}$ for all initial data $(f,g)\in D(\tilde\ca)$.
\item Conversely, assume $({\tilde\ca},D({\tilde\ca}))$ to be closed and densely defined. If $\mathrm{(AIBVP)}$ is well-posed, then $({\tilde\ca},D({\tilde\ca}))$ generates a strongly continuous semigroup $(\tilde\ct(t))_{t\geq0}$ on $\mathcal X$.
\end{enumerate}
\end{theorem}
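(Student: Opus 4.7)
The plan is to use Proposition~\ref{prop:2.5.} as the bridge: well-posedness of $\mathrm{(ACP)}$ for the closed operator $\tilde\ca$ is equivalent to $\tilde\ca$ generating a $C_0$-semigroup. Lemma~\ref{lem:2.6.} then provides a one-to-one correspondence between classical solutions of $\mathrm{(AIBVP)}$ and those of the associated $\mathrm{(ACP)}$, so the whole task reduces to transferring the three well-posedness axioms across this correspondence, in both directions.

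For part $(i)$, assuming $\tilde\ca$ generates $(\tilde\ct(t))_{t\geq 0}$, I would check axioms $\mathrm{(WP_1)}$--$\mathrm{(WP_3)}$ of \eqref{eq:AIBVP} one by one. Axiom $\mathrm{(WP_1)}$ follows from density of $D(\tilde\ca)$ in $\cx$ together with the identity $\pi_1(D(\tilde\ca))=D(A)$. For $\mathrm{(WP_2)}$, any admissible pair $(f,g)$ with $Lf=g$ lies in $D(\tilde\ca)$, so $\pi_1\tilde\ct(\cdot)\binom{f}{g}$ is a classical solution of $\mathrm{(AIBVP)}_{f,g}$ by Lemma~\ref{lem:2.6.}$(ii)$; uniqueness is then obtained by lifting any competing solution $u'$ to $\binom{u'}{Lu'}$ via Lemma~\ref{lem:2.6.}$(i)$ and invoking uniqueness for $\mathrm{(ACP)}$. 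Axiom $\mathrm{(WP_3)}$ is immediate from the continuous dependence of $\tilde\ct(\cdot)\binom{f_n}{g_n}$ on the initial data, projected coordinate-wise onto $X$ and $\dx$.

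For part $(ii)$, with $\tilde\ca$ closed and densely defined and $\mathrm{(AIBVP)}$ well-posed, I would verify the three criteria for well-posedness of $\mathrm{(ACP)}$ and then invoke Proposition~\ref{prop:2.5.}. Density of $D(\tilde\ca)$ in $\cx$ is assumed. For existence, given $\mathbf{x}=\binom{f}{g}\in D(\tilde\ca)$ we have $f\in D(A)$ and $Lf=g$, so $\mathrm{(WP_2)}$ furnishes a unique classical solution $u(\cdot,f,g)$ of $\mathrm{(AIBVP)}_{f,g}$, and Lemma~\ref{lem:2.6.}$(i)$ promotes it to $\binom{u}{Lu}$ solving $\mathrm{(ACP)}_\mathbf{x}$. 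Continuous dependence on the initial data comes from $\mathrm{(WP_3)}$ applied to $(f_n,g_n):=(\pi_1\mathbf{x}_n,\pi_2\mathbf{x}_n)$, which converge to $0$ in $X$ and $\dx$ respectively and automatically satisfy $Lf_n=g_n$ since $\mathbf{x}_n\in D(\tilde\ca)$.

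The main subtlety I anticipate lies in the uniqueness step of part $(ii)$: given two classical solutions $\cu,\cu'$ of $\mathrm{(ACP)}_\mathbf{x}$, Lemma~\ref{lem:2.6.}$(ii)$ together with $\mathrm{(WP_2)}$ forces $\pi_1\cu=\pi_1\cu'$, but one must further observe that $\cu(t),\cu'(t)\in D(\tilde\ca)$ for every $t$ pins down the second coordinate through $\pi_2\cu(t)=L\pi_1\cu(t)$, whence $\cu=\cu'$. Once this compatibility observation is in place, every remaining verification is a routine unwinding of the definitions.
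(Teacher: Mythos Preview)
Your proposal is correct and follows essentially the same route as the paper: both use Proposition~\ref{prop:2.5.} to reduce the question to well-posedness of the associated $\mathrm{(ACP)}$ and then invoke Lemma~\ref{lem:2.6.} in each direction to transfer the axioms. Your explicit treatment of the uniqueness step in part~$(ii)$---observing that $\pi_2\cu(t)=L\pi_1\cu(t)$ because $\cu(t)\in D(\tilde\ca)$---is in fact more careful than the paper's proof, which compresses this into the single phrase ``by Lemma~\ref{lem:2.6.}''; the same identification is also what makes your $\mathrm{(WP_3)}$ argument in part~$(i)$ work, since the requirement concerns $Lu$ rather than an arbitrary second coordinate.
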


\begin{proof}
$(i)$ Let $({\tilde\ca},D({\tilde\ca}))$ generate a strongly continuous semigroup $({\tilde\ct}(t))_{t\geq0}$ on $\cx$. Then ${\tilde\ca}$ is densely defined, which implies that $A$ is densely defined, too. Moreover, $\tilde\ca$ is closed and, by Proposition~2.5, the associated $\mathrm{(ACP)}$ is well-posed. By Lemma~\ref{lem:2.6.}.$(ii)$, $u(\cdot,f,g)=\pi_1{\cu}(\cdot,\binom{f}{g})= \pi_1{\tilde\ct}(\cdot)\binom{f}{g}$ yields a classical solution to $\mathrm{(AIBVP)}_{f,g}$ for all $(f,g)\in D(A)\times\dx$ such that $Lf=g$, i.e., for all $(f,g)\in D(\tilde\ca)$. This classical solution is unique by Lemma~\ref{lem:2.6.}.$(i)$.

Let now $t_0>0$ and $(f_n,g_n)_{n\in\mathbb{N}}$ be a sequence of initial data satisfying $Lf_n=g_n$ and tending to 0. Note that $\binom{f_n}{g_n}\in D(\tilde\ca)$ and $\cu (t,\binom{f_n}{g_n})\in D(\tilde\ca)$ for all $t\geq0$. Hence we have $\limn {\cu}(t,\binom{f_n}{g_n})=0$ uniformly in $[0,t_0]$ if and only if $\limn u(t,f_n,g_n)=0$ and $\limn Lu(t,f_n,g_n)=0$ (both uniformly in $[0,t_0]$). Since $\mathrm{(ACP)}$ is well-posed, the assertion follows.

$(ii)$ Since $\tilde\ca$ is closed, it suffices by Proposition~\ref{prop:2.5.} to show that the associated $\mathrm{(ACP)}$ is well-posed. Let $\mathbf{x}\in D(\tilde\ca)$. Well-posedness of $\mathrm{(AIBVP)}$ yields, by Lemma~\ref{lem:2.6.}, existence and uniqueness of a classical solution to $(\mathrm{ACP})_\mathbf{x}$. By assumption, $\overline{D(\ca)}=\cx$, therefore it only remains to show the continuous dependence on initial data. Let $t_0>0$ and $(\mathbf{x}_n)_{n\in\mathbb{N}}\subseteq D({\ca})$ be a sequence of initial data tending to 0. Then,
$(\pi_1({\mathbf{x}}_n),\pi_2(\mathbf{x}_n))_{n\in\mathbb{N}}$ is a sequence of initial data for $\mathrm{(AIBVP)}$ tending to 0 and such that $L\pi_1({\mathbf{x}}_n)=\pi_2(\mathbf{x}_n)$, and, by assumptions, there holds $\limn u(t,\pi_1\mathbf{x}_n,\pi_2\mathbf{x}_n)=0$ and $\limn Lu(t,\pi_1{\mathbf{x}}_n,\pi_2\mathbf{x}_n)=0$ (both uniformly in $[0,t_0]$). Also, $\cu(\cdot,\mathbf{x}_n)=\binom{u}{Lu}(\cdot,{\pi_1{\mathbf{x}}_n\choose\pi_2\mathbf{x}_n})$ is the (unique) classical solution to $(\mathrm{ACP})_{\mathbf{x}_n}$ for each $n\in\mathbb{N}$, and we finally obtain $\limn {\cu}(t,{\bf x}_n)=\limn \binom{u}{Lu} (t,{\pi_1{\bf x}_n\choose\pi_2\mathbf{x}_n})=0$ uniformly in $[0,t_0]$.
\end{proof}

In the same way we obtain mild solutions to \eqref{eq:AIBVP}.

\begin{corollary}\label{cor:2.9.}
Let $({\tilde\ca},D({\tilde\ca}))$ generate a strongly continuous semigroup $(\tilde\ct(t))_{t\geq0}$ on $\cx$. Then $u(t):=\pi_1\tilde\ct(\cdot)\binom{f}{g}$ yields the unique mild solution to $\mathrm{(AIBVP)}_{f,g}$ for all initial data $(f,g)\in\mathcal X$.
\end{corollary}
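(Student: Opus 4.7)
The plan is to combine Proposition~\ref{prop:2.5.} (which already characterizes mild solutions to $(\mathrm{ACP})$ in terms of the semigroup) with the correspondence established in Lemma~\ref{lem:2.7.} between mild solutions of $(\mathrm{AIBVP})$ and $(\mathrm{ACP})$. Since $\tilde\ca$ is assumed to generate $(\tilde\ct(t))_{t\geq 0}$, Proposition~\ref{prop:2.5.} directly yields that for every $\mathbf{x}\in\cx$ the function $\cu(t,\mathbf{x}):=\tilde\ct(t)\mathbf{x}$ is the unique mild solution to $(\mathrm{ACP})_{\mathbf{x}}$. Applying this to $\mathbf{x}:=\binom{f}{g}$ gives us the starting point for both existence and uniqueness.

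For \emph{existence}, I would invoke Lemma~\ref{lem:2.7.}.(ii): from the mild solution $\cu=\tilde\ct(\cdot)\binom{f}{g}$ of $(\mathrm{ACP})_{\binom{f}{g}}$, the projection $u:=\pi_1\cu=\pi_1\tilde\ct(\cdot)\binom{f}{g}$ is a mild solution to $(\mathrm{AIBVP})_{\pi_1\binom{f}{g},\pi_2\binom{f}{g}}=(\mathrm{AIBVP})_{f,g}$. This is a one-line application and requires no additional work.

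For \emph{uniqueness}, suppose $u$ is any mild solution to $(\mathrm{AIBVP})_{f,g}$. By Lemma~\ref{lem:2.7.}.(i), the function $\cu:=\binom{u}{v}$ defined by $v(t):=g+B\bigl(\int_0^t u(s)\,ds\bigr)$ is a mild solution to $(\mathrm{ACP})_{\binom{f}{g}}$. By the uniqueness clause of Proposition~\ref{prop:2.5.} one has $\cu=\tilde\ct(\cdot)\binom{f}{g}$, and taking first coordinates yields $u=\pi_1\tilde\ct(\cdot)\binom{f}{g}$, as claimed.

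There is essentially no substantive obstacle here: the corollary is a routine consequence of the two preceding results, and the proof amounts to stitching together Proposition~\ref{prop:2.5.} and Lemma~\ref{lem:2.7.}. The only point worth checking carefully is that Lemma~\ref{lem:2.7.} is indeed formulated for arbitrary $\mathbf{x}\in\cx$ (not only for $\mathbf{x}\in D(\tilde\ca)$), so that it applies to mild rather than classical solutions; this is exactly what is needed, and no density assumption on $D(\tilde\ca)$ has to be added beyond what generation of the semigroup already provides.
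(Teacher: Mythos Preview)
Your proposal is correct and matches the paper's intended argument exactly: the paper does not give an explicit proof of Corollary~\ref{cor:2.9.} but only says ``In the same way we obtain mild solutions to \eqref{eq:AIBVP}'', referring to the proof of Theorem~\ref{thm:2.8.} with Lemma~\ref{lem:2.7.} playing the role of Lemma~\ref{lem:2.6.}. Your write-up spells out precisely these details, using Proposition~\ref{prop:2.5.} for existence and uniqueness of mild solutions to $(\mathrm{ACP})$ and Lemma~\ref{lem:2.7.} in both directions to transfer them to $(\mathrm{AIBVP})$.
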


Sufficient conditions on the operators $A$, $B$, and $L$ implying
$\ca$ and $\tilde\ca$ to be generators, hence implying the
well-posedness of \eqref{eq:AIBVP}, will be given in the next section.

\section{WELL-POSED INITIAL--BOUNDARY VALUE PROBLEMS}
\label{sec:3}

We first study the situation from Example~1.2, i.e., we consider the
case where $B=0$.

\begin{assumption}\label{assum:3.1.}
In order to obtain well-posedness of \eqref{eq:AIBVP},
we now impose the following.
\begin{enumerate}[label=\rm{(G$_{\arabic*}$)}, itemsep=2pt, parsep=0pt]
\item The restriction $A_0:=A\big|_{\ker(L)}: D(A_0):=\ker(L)\to X$ is closed and densely defined, and it has non-empty resolvent set.
\item The boundary operator $L:D(A)\to \dx$ is surjective.
\item The operator $\binom{A}{L}:D(A)\subseteq X \to X\times\dx$ is closed.
\end{enumerate}
\end{assumption}

The following key lemma is essentially due to Greiner
(\cite[Lemma~1.2 and Lemma~1.3]{[Gr87]}, see also \cite[Lemma~2.3]{[CENN01]}).

\begin{lemma}\label{lem:3.2.}
Let $\lambda\in\rho(A_0)$. Then the restriction $L\big|_{\ker(\lambda-A)}$ is invertible and its inverse
$$\cd_\lambda:=\left(L\big|_{\ker(\lambda-A)}\right)^{-1}:\partial
X\to \ker(\lambda-A),$$ 
called \textbf{Dirichlet operator}, is bounded. Moreover, for all 
$\mu\in\rho(A_0)$, $\cd_\mu$ is related to $\cd_\lambda$ by
\begin{equation}\label{eq:3.1}
\cd_\lambda=(I+(\mu-\lambda) R(\lambda,A_0))\cd_\mu.
\end{equation}
\end{lemma}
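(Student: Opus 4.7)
The plan is to handle the three assertions separately: invertibility of $L|_{\ker(\lambda-A)}$, boundedness of the inverse $\cd_\lambda$, and the resolvent-type identity \eqref{eq:3.1}.

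First I would establish injectivity. If $u\in\ker(\lambda-A)$ with $Lu=0$, then $u\in\ker(L)=D(A_0)$ and $A_0u=\lambda u$; since $\lambda\in\rho(A_0)$, this forces $u=0$. For surjectivity I would use (G$_2$) to pick, for any $v\in\dx$, some $w\in D(A)$ with $Lw=v$. The element $(\lambda-A)w\in X$ can then be inverted inside $D(A_0)$: set $y:=R(\lambda,A_0)(\lambda-A)w\in D(A_0)$ and define $u:=w-y$. Since $y\in\ker(L)$, we get $Lu=v$, and a direct computation gives $(\lambda-A)u=(\lambda-A)w-(\lambda-A_0)y=0$, i.e.\ $u\in\ker(\lambda-A)$. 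This $u$ is the desired preimage.

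For boundedness I would appeal to the closed graph theorem, which applies because $\dx$ and $X$ are Banach spaces and $\cd_\lambda$ is everywhere defined on $\dx$ by the surjectivity just shown. So it suffices to check that $\cd_\lambda$ is closed. Suppose $v_n\to v$ in $\dx$ and $u_n:=\cd_\lambda v_n\to u$ in $X$. Then $Lu_n=v_n\to v$ and $Au_n=\lambda u_n\to\lambda u$, so $\binom{A}{L}u_n\to\binom{\lambda u}{v}$. By (G$_3$) the operator $\binom{A}{L}$ is closed, hence $u\in D(A)$, $Au=\lambda u$, and $Lu=v$; thus $u=\cd_\lambda v$ and $\cd_\lambda$ is closed. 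The main subtlety here is really just remembering that one needs $\binom{A}{L}$ closed (not merely $A_0$ closed) in order to pass to the limit in both components simultaneously; this is exactly what (G$_3$) supplies.

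Finally for \eqref{eq:3.1}, fix $v\in\dx$, set $u:=\cd_\mu v$ so that $Au=\mu u$ and $Lu=v$, and put $\tilde u:=u+(\mu-\lambda)R(\lambda,A_0)u$. Since $R(\lambda,A_0)u\in D(A_0)\subseteq D(A)\cap\ker(L)$, we have $\tilde u\in D(A)$ and $L\tilde u=Lu=v$. Using $Au=\mu u$ together with $(\lambda-A_0)R(\lambda,A_0)u=u$, a short computation yields
\begin{equation*}
(\lambda-A)\tilde u=(\lambda-\mu)u+(\mu-\lambda)u=0,
\end{equation*}
so $\tilde u\in\ker(\lambda-A)$ with $L\tilde u=v$, whence $\tilde u=\cd_\lambda v$ by the already established injectivity. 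Since $v$ was arbitrary, this is the claimed identity.
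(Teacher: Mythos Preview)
Your argument is correct in all three parts: the injectivity/surjectivity split for $L|_{\ker(\lambda-A)}$, the closed-graph argument for boundedness of $\cd_\lambda$ (which indeed requires (G$_3$) and not merely the closedness of $A_0$), and the verification of \eqref{eq:3.1} by directly checking that $(I+(\mu-\lambda)R(\lambda,A_0))\cd_\mu v$ lies in $\ker(\lambda-A)$ with boundary value $v$.

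There is nothing to compare against here: the paper does not supply its own proof of Lemma~\ref{lem:3.2.} but simply attributes the result to Greiner \cite[Lemma~1.2 and Lemma~1.3]{[Gr87]} (see also \cite[Lemma~2.3]{[CENN01]}). Your write-up is essentially the standard proof one finds in those references.
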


These Dirichlet operators play an important role in the following.

\begin{lemma}\label{lem:3.3.}
Let $A_0$ generate a strongly continuous semigroup $(T(t))_{t\geq0}$. For each $\lambda\in\rho(A_0)$ define a family of operators $(Q_\lambda(t))_{t\geq0}$ from $\dx$ to $X$ by
$$Q_\lambda(t)v:=(\lambda-A_0) \int_0^t T(s)\cd_\lambda v\;ds,
\qquad\enspace t\geq0,\; v\in \dx.$$
Then, for a given $t\geq0$, all the operators $Q_\lambda(t)$, 
$\lambda\in\rho(A_0)$, coincide, and will be denoted by $Q(t)$.
In particular, if $A_0$ is invertible, then 
\begin{equation}\label{eq:3.2}
Q(t)=\left(I_X-T(t)\right)\cd_0
\end{equation}
for all $t\geq0$.
\end{lemma}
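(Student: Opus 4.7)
The plan is to reduce both claims to the fundamental semigroup identity $A_0\int_0^t T(s)x\,ds = T(t)x - x$, valid for every $x\in X$, combined with the intertwining formula \eqref{eq:3.1} from Lemma~\ref{lem:3.2.}. Setting $J_\lambda := \int_0^t T(s)\cd_\lambda v\,ds$, the identity above gives $J_\lambda\in D(A_0)$ with $A_0 J_\lambda = T(t)\cd_\lambda v - \cd_\lambda v$, so
$$Q_\lambda(t)v = (\lambda - A_0)J_\lambda = \lambda J_\lambda - T(t)\cd_\lambda v + \cd_\lambda v$$
is well defined for every $\lambda\in\rho(A_0)$.

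For the $\lambda$-independence, I would fix $\lambda,\mu\in\rho(A_0)$ and invoke \eqref{eq:3.1} to write $\cd_\lambda v = \cd_\mu v + (\mu-\lambda)R(\lambda,A_0)\cd_\mu v$. Since $R(\lambda,A_0)$ commutes with every $T(s)$, the same decomposition transfers to the integral and to $T(t)\cd_\lambda v$; in particular $J_\lambda = J_\mu + (\mu-\lambda)R(\lambda,A_0)J_\mu$. Substituting all three expansions into the formula above, the $\lambda$-contributions regroup as
$$(\mu-\lambda)R(\lambda,A_0)\bigl[\lambda J_\mu - T(t)\cd_\mu v + \cd_\mu v\bigr] = (\mu-\lambda)R(\lambda,A_0)(\lambda - A_0)J_\mu = (\mu-\lambda)J_\mu,$$
using $J_\mu\in D(A_0)$ in the last step. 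A short rearrangement then yields
$$Q_\lambda(t)v = (\lambda - A_0)J_\mu + (\mu - \lambda)J_\mu = (\mu - A_0)J_\mu = Q_\mu(t)v,$$
so $Q_\lambda(t)$ does not depend on $\lambda$.

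For the second assertion, since $A_0$ is invertible we have $0\in\rho(A_0)$, so I can simply specialize $\lambda=0$ in the definition to obtain $Q(t)v = Q_0(t)v = -A_0\int_0^t T(s)\cd_0 v\,ds = -(T(t)-I_X)\cd_0 v = (I_X - T(t))\cd_0 v$. The main obstacle I anticipate is purely bookkeeping: one has to keep straight which term carries $\cd_\lambda$ versus $\cd_\mu$ and apply the commutation $R(\lambda,A_0)T(s) = T(s)R(\lambda,A_0)$ consistently so that the $(\mu-\lambda)$ pieces align and collapse into $(\mu-A_0)J_\mu$; no deeper ingredient beyond \eqref{eq:3.1} and standard semigroup calculus should be required.
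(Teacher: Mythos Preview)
Your argument is correct and follows essentially the same approach as the paper: both use the intertwining relation \eqref{eq:3.1} together with the commutation of $R(\lambda,A_0)$ with $T(s)$ and the identity $(\lambda-A_0)R(\lambda,A_0)=I$, and both obtain \eqref{eq:3.2} by specializing to $\lambda=0$ via $A_0\int_0^t T(s)x\,ds=T(t)x-x$. The paper's computation is slightly more direct in that it substitutes \eqref{eq:3.1} into $\cd_\lambda$ while keeping the factor $(\lambda-A_0)$ intact, so the second summand collapses immediately to $(\mu-\lambda)\int_0^t T(s)\cd_\mu v\,ds$ without first expanding $Q_\lambda(t)v$ into $\lambda J_\lambda - T(t)\cd_\lambda v + \cd_\lambda v$; but this is only a cosmetic shortcut, not a different idea.
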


\begin{proof}
Let $\lambda,\mu\in\rho(A_0)$, $v\in\dx$. Then
$$\begin{aligned}
 Q_\lambda(t)v&=(\lambda-A_0) \int_0^t T(s)\cd_\lambda v\;ds\\
 &{\buildrel \eqref{eq:3.1}\over =}(\lambda-A_0) \int_0^t T(s)\cd_\mu v\;ds
 +(\mu-\lambda)(\lambda-A_0)\int_0^t T(s)R(\lambda,A_0)\cd_\mu v\;ds\\
 &=(\lambda-A_0) \int_0^t T(s)\cd_\mu v\;ds
 +(\mu-\lambda)\int_0^t T(s)\cd_\mu v\;ds
=Q_\mu(t)v
\end{aligned}$$
by standard properties of strongly continuous semigroups, cf. \cite[Lemma~II.1.13 and Thm.~II.1.14]{[EN00]}). Integrating by parts we obtain \eqref{eq:3.2}.
\end{proof}

We now return to the operator matrix $\ca$ defined in \eqref{eq:2.1}.

\begin{lemma}
\label{lem:domain-factorization}
Let $\lambda \in \rho(A_0)$. Then
\[
D({\ca}) = \left\{ \binom{u}{v} \in D(A) \times \dx : L u = v \right\}
\]
coincides with
\begin{equation}
\label{eq:ch-def}
\ch := \left\{ \binom{u}{v} \in X \times \dx : u - \cd_\lambda v \in D(A_0) \right\}.
\end{equation}
Moreover, the identity
\begin{equation}
\label{eq:factorization}
\ca - \lambda = \begin{pmatrix} A_0 - \lambda & 0 \\ 0 & -\lambda \end{pmatrix}
\begin{pmatrix} I_X & -\cd_\lambda \\ 0 & I_{\dx} \end{pmatrix} =: \ca_\lambda \crl
\end{equation}
holds.
\end{lemma}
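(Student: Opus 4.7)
The plan is to establish the domain identification $D(\ca) = \ch$ by a direct double inclusion, exploiting the two defining properties of the Dirichlet operator from Lemma~\ref{lem:3.2.} (namely $L\cd_\lambda = I_{\dx}$ and $\cd_\lambda \dx \subseteq \ker(\lambda - A) \subseteq D(A)$), and then to verify the matrix factorization \eqref{eq:factorization} by a short calculation based on the decomposition $u = (u-\cd_\lambda v) + \cd_\lambda v$.

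For the inclusion $D(\ca) \subseteq \ch$, I would take $\binom{u}{v} \in D(\ca)$, note that $\cd_\lambda v \in D(A)$, and observe that $L(u - \cd_\lambda v) = Lu - v = v - v = 0$, so $u - \cd_\lambda v \in \ker(L) = D(A_0)$. The reverse inclusion $\ch \subseteq D(\ca)$ runs the same argument in reverse: from $u - \cd_\lambda v \in D(A_0) \subseteq D(A)$ and $\cd_\lambda v \in D(A)$ one gets $u \in D(A)$, and then $Lu = L(u - \cd_\lambda v) + L\cd_\lambda v = 0 + v = v$, so $\binom{u}{v} \in D(\ca)$.

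For the factorization, the key input is that $(A - \lambda)\cd_\lambda v = 0$ for every $v \in \dx$, since $\cd_\lambda v \in \ker(\lambda - A)$. Then, for $\binom{u}{v} \in D(\ca) = \ch$, using that $A$ restricts to $A_0$ on $D(A_0)$,
\begin{align*}
(A - \lambda) u &= (A - \lambda)(u - \cd_\lambda v) + (A - \lambda)\cd_\lambda v \\
&= (A_0 - \lambda)(u - \cd_\lambda v),
\end{align*}
while the second component of $\ca - \lambda$ is just multiplication by $-\lambda$. A direct multiplication of the two matrices on the right-hand side of \eqref{eq:factorization} shows that their composition sends $\binom{u}{v}$ to $\binom{(A_0 - \lambda)(u - \cd_\lambda v)}{-\lambda v}$, which matches $(\ca - \lambda)\binom{u}{v}$.

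I do not anticipate any serious obstacle: the statement is essentially bookkeeping around the splitting $u = (u - \cd_\lambda v) + \cd_\lambda v$. The one point requiring a little care is the domain match for the composite operator $\ca_\lambda \crl$; this is exactly what the first part of the lemma provides, since $\crl$ is a bounded bijection of $\cx = X \times \dx$ onto itself (with inverse obtained by flipping the sign of $\cd_\lambda$) that carries $\ch$ onto the natural domain $D(A_0) \times \dx$ of $\ca_\lambda$.
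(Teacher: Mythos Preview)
Your proof is correct and follows essentially the same approach as the paper: both arguments identify $D(\ca)$ with $\ch$ via the equivalence $u-\cd_\lambda v\in D(A_0)=\ker(L)\Leftrightarrow u\in D(A)$ and $Lu=v$ (you present it as a double inclusion, the paper as a biconditional), and both verify \eqref{eq:factorization} by the same direct computation using $(A-\lambda)\cd_\lambda v=0$. Your additional remark that $\crl$ is a bounded bijection of $\cx$ carrying $\ch$ onto $D(A_0)\times\dx$ makes the domain match for $\ca_\lambda\crl$ explicit, which the paper states more tersely.
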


\begin{proof}
To show that $D(\ca)$ equals $\ch$ take a vector $\binom{u}{v} \in \cx$. Then, $\binom{u}{v} \in \ch$ if and only if $u - \cd_\lambda v \in \ker(L)$. But $\cd_\lambda v \in \ker(\lambda - A) \subseteq D(A)$, and hence $u - \cd_\lambda v \in D(A_0) = \ker(L)$ if and only if $u \in D(A)$ and $L(u - \cd_\lambda v) = L u - v = 0$, i.e., $L u = v$. Thus $D(\ca) = \ch$.

It is clear that $\ch = D(\ca_\lambda \crl)$ since $\binom{u}{v} \in D(\ca_\lambda \crl)$ if and only if
\[
\crl \binom{u}{v} = \binom{u - \cd_\lambda v}{v} \in D(\ca_\lambda) = D(A_0) \times \dx.
\]
To show \eqref{eq:factorization}, we take $\binom{u}{v} \in D(\ca_\lambda \crl)$ and obtain
\[
\ca_\lambda \crl \binom{u}{v} = \begin{pmatrix} A_0 - \lambda & 0 \\ 0 & -\lambda \end{pmatrix}
\binom{u - \cd_\lambda v}{v} = \binom{(A - \lambda)u}{-\lambda v} = (\ca - \lambda) \binom{u}{v}. \qedhere
\]
\end{proof}

\begin{remark}
\label{rem:factorization-remarks}
\begin{enumerate}
\item The factorization \eqref{eq:factorization} shows that $\ca - \lambda$ is a so-called \emph{one-sided coupled operator matrix}. For more details about this general theory, we refer the reader to \cite{[En99]}, \cite{[En01]}, or \cite{[KMN03]}.
\item Observe that, under the Assumptions~\ref{assum:3.1.}, and by Lemma~\ref{lem:2.3.} and \ref{lem:2.4.}, $(\ca, D(\ca))$ is closed and densely defined. Therefore, Theorem~\ref{thm:2.8.} implies that \eqref{eq:AIBVP} is well-posed if and only if $\ca$ generates a strongly continuous semigroup.
\end{enumerate}
\end{remark}

\begin{theorem}
\label{thm:ca-generator}
Under the Assumptions~\ref{assum:3.1.} the following conditions are equivalent.
\begin{enumerate}[label=(\alph*)]
\item The operator $(\ca, D(\ca))$ generates a strongly continuous semigroup on $\cx$.
\item The operator $(A_0, D(A_0))$ generates a strongly continuous semigroup on $X$.
\end{enumerate}
If (a) holds, the semigroup $(\ct(t))_{t \ge 0}$ generated by $\ca$ is given by
\begin{equation}
\label{eq:semigroup-formula}
\ct(t) = \begin{pmatrix} T(t) & Q(t) \\ 0 & I_\dx \end{pmatrix},
\end{equation}
for all $t \geq 0$, where $(T(t))_{t \geq 0}$ is the semigroup generated by $A_0$ and $(Q(t))_{t \geq 0}$ is the family of operators introduced in Lemma~\ref{lem:3.3.}. In the particular case of invertible $A_0$, $\ttt$ is given by
\begin{equation}
\label{eq:semigroup-explicit}
\ct(t) = \begin{pmatrix} T(t) & (I_X - T(t))\cd_0 \\ 0 & I_\dx \end{pmatrix}.
\end{equation}
\end{theorem}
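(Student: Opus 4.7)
The plan is to split into the two implications. I would prove $(b)\Rightarrow(a)$ constructively by verifying that \eqref{eq:semigroup-formula} defines a $C_0$-semigroup with generator $\ca$, and then obtain $(a)\Rightarrow(b)$ by restricting the given semigroup to the $\ca$-invariant subspace $X\times\{0\}$.

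For $(b)\Rightarrow(a)$, I would fix $\lambda\in\rho(A_0)$ and work with the alternative representation
$$Q(t)v=(I_X-T(t))\cd_\lambda v+\lambda\int_0^t T(s)\cd_\lambda v\,ds,$$
obtained from the definition of $Q_\lambda(t)$ via the standard identity $A_0\int_0^t T(s)x\,ds=T(t)x-x$. Strong continuity of $\ct(t)$ is then immediate, and the semigroup law $\ct(t+s)=\ct(t)\ct(s)$ reduces to the single identity $T(t)Q(s)+Q(t)=Q(t+s)$, which follows by a direct telescoping calculation from the representation above. To identify the generator, for $\binom{u}{v}\in D(\ca)$ I would use that $u-\cd_\lambda v\in\ker(L)=D(A_0)$ (because $Lu=v=L\cd_\lambda v$) and rearrange
$$T(t)u+Q(t)v=T(t)(u-\cd_\lambda v)+\cd_\lambda v+\lambda\int_0^t T(s)\cd_\lambda v\,ds,$$
whose derivative at $t=0$ equals $A_0(u-\cd_\lambda v)+\lambda\cd_\lambda v=Au$, using $A\cd_\lambda v=\lambda\cd_\lambda v$. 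This shows the generator $G$ of $\ct(t)$ extends $\ca$; equality then follows because the factorization Lemma~\ref{lem:domain-factorization} gives $\rho(A_0)\setminus\{0\}\subseteq\rho(\ca)$, so $\rho(\ca)\cap\rho(G)$ meets the right half-plane generated by $G$, and any two closed operators with a common resolvent point, one extending the other, coincide.

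For $(a)\Rightarrow(b)$, the key observation is that $\ca$ has identically zero second row on $D(\ca)$. Thus for $u\in D(A_0)$ one has $\binom{u}{0}\in D(\ca)$ and the orbit $\ct(t)\binom{u}{0}$ has zero time-derivative in its second coordinate, hence stays in $X\times\{0\}$; density of $D(A_0)$ in $X$ (Assumption~\ref{assum:3.1.}(G$_1$)) and continuity of $\ct(t)$ propagate this invariance to all of $X\times\{0\}$. The restriction $T(t):=\pi_1\ct(t)\big|_{X\times\{0\}}$ is therefore a $C_0$-semigroup on $X$, and I would identify its generator $\widetilde A$ through the standard characterization $u\in D(\widetilde A)\iff\binom{u}{0}\in D(\ca)\iff u\in D(A)\text{ with }Lu=0\iff u\in D(A_0)$, with $\widetilde A u=\pi_1\ca\binom{u}{0}=Au=A_0u$.

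The main obstacle will be the generator calculation in $(b)\Rightarrow(a)$: naive differentiation of $t\mapsto T(t)u$ at $0$ fails because $u\in D(A)$ need not lie in $D(A_0)$, so $T(t)$ alone does not respect the boundary datum carried by $v$. The decisive device is the absorption of the ``boundary component'' $\cd_\lambda v$ of $u$ into the $Q(t)v$ summand, which causes the obstructive $T(t)\cd_\lambda v$ contributions to cancel and leaves only terms whose derivatives at $0$ are controlled by $A_0$ and the boundary resolvent identity $A\cd_\lambda v=\lambda\cd_\lambda v$.
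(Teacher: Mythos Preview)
Your proposal is correct and complete, but it takes a genuinely different route from the paper in both directions. For $(b)\Rightarrow(a)$ you both verify the semigroup law by direct computation, but you identify the generator by differentiating $t\mapsto\ct(t)\binom{u}{v}$ at $0$ after the cancellation $T(t)u+Q(t)v=T(t)(u-\cd_\lambda v)+\cd_\lambda v+\lambda\int_0^tT(s)\cd_\lambda v\,ds$, and then use the common-resolvent-point argument to pass from $\ca\subseteq G$ to $\ca=G$. The paper instead computes the Laplace transform of $\ct(\cdot)$ and matches it entrywise against the explicit resolvent formula \eqref{eq:resolvent-formula}, invoking injectivity of the Laplace transform. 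Your method is more elementary and, importantly, works uniformly for any fixed $\lambda\in\rho(A_0)$, whereas the paper first treats the invertible case $0\in\rho(A_0)$ via \eqref{eq:semigroup-explicit} and then appeals to rescaling. For $(a)\Rightarrow(b)$ the paper goes through Hille--Yosida: it reads off $\|R(\lambda,A_0)^n\|\le\|R(\lambda,\ca)^n\|$ from the upper-triangular resolvent \eqref{eq:resolvent-formula} and transfers the growth estimate. You instead observe that $X\times\{0\}$ is a closed $\ct(t)$-invariant subspace (because the second row of $\ca$ vanishes) and identify the restricted generator with $A_0$; this is cleaner and avoids resolvent powers altogether, at the mild cost of needing the density assumption (G$_1$) to propagate invariance from $D(A_0)\times\{0\}$ to all of $X\times\{0\}$.
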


\begin{proof}
(a) $\Rightarrow$ (b). Assume that $\ca$ is a generator. Observe first that $\rho(\ca) = \rho(A_0) \setminus \{0\}$. Indeed, it follows from \eqref{eq:factorization} that for all $\lambda \in \rho(A_0) \setminus \{0\}$ the resolvent is given by
\begin{equation}
\label{eq:resolvent-formula}
R(\lambda, \ca) = \begin{pmatrix} R(\lambda, A_0) & \frac{1}{\lambda}\cd_\lambda \\ 0 & \frac{1}{\lambda} \end{pmatrix}.
\end{equation}
Hence, its powers are
\[
R(\lambda,\ca)^n = \begin{pmatrix} R(\lambda,A_0)^n & * \\ 0 & \frac{1}{\lambda^n} \end{pmatrix} \qquad \text{for all } n \in \mathbb{N}.
\]
Therefore, $\Vert{R(\lambda,A_0)^n}$ can be dominated by $\Vert{R(\lambda,\ca)^n}\Vert$ and satisfies a Hille--Yosida estimate. The closedness of $A_0$ follows by the closedness of $\ca$. Finally, $D(A_0)$ is dense in $X$ by Assumption (G$_1$), and hence $A_0$ is a generator.

(b) $\Rightarrow$ (a). Assume first $A_0$ to be invertible, and define $\ct(t)$ as in \eqref{eq:semigroup-explicit}. We show that the family $(\ct(t))_{t \geq 0}$ is a strongly continuous semigroup and then verify that its generator is $\ca$.

To prove the semigroup property, take $t,s \in \pos$ and $u \in X$, $v \in \dx$. We then have
\begin{align*}
\ct(t)\ct(s)\binom{u}{v} &= \begin{pmatrix} T(t) & Q(t) \\ 0 & I_\dx \end{pmatrix}
\begin{pmatrix} T(s) & Q(s) \\ 0 & I_\dx \end{pmatrix} \binom{u}{v} \\
&= \binom{T(t)T(s)u + T(t)\cd_0 v - T(t)T(s)\cd_0 v + \cd_0 v - T(t)\cd_0 v}{v} \\
&= \binom{T(t+s)u + (I_X - T(t+s))\cd_0 v}{v} \\
&= \ct(t+s)\binom{u}{v}.
\end{align*}
Moreover,
\[
\ct(0) = \begin{pmatrix} T(0) & Q(0) \\ 0 & I_\dx \end{pmatrix} = \begin{pmatrix} I_X & 0 \\ 0 & I_\dx \end{pmatrix} = I_\cx.
\]

To show strong continuity of $(\ct(t))_{t \ge 0}$, i.e.,
\[
\lim_{t \to 0^+} \ct(t)\binom{u}{v} = \binom{\lim\limits_{t \to 0^+} [T(t)u + Q(t)v]}{v} = \binom{u}{v}
\]
for all $\binom{u}{v} \in \cx$, it suffices to verify that the operators $Q(t)$ converge to $0$ as $t \to 0^+$. This follows indeed from \eqref{eq:3.2} and from the boundedness of $\cd_0$.

Finally, in order to show that $\ca$ is actually the generator of $(\ct(t))_{t \geq 0}$, we note that for $\lambda$ with large real part the Laplace transform of $\ct(\cdot)$ coincides with the resolvent of its generator. In particular, and taking into account the explicit formula \eqref{eq:resolvent-formula} for the resolvent of $\ca$, we obtain for the Laplace transform of the upper right entry ${\mathcal T}(\cdot)_{12}$ of $\ct(\cdot)$ the equality
\[
\mathcal{L}({\mathcal T}(\cdot)_{12})(\lambda) = R(\lambda,\ca)_{12} = \frac{1}{\lambda}\cd_\lambda.
\]
On the other hand, the convolution theorem for the Laplace transform implies that $\mathcal{L}(Q(\cdot)v)(\lambda) = \frac{1}{\lambda}\cd_\lambda v$ for all $v \in \dx$ and $\lambda$ sufficiently large. This finally proves that $\mathcal{L}(Q(\cdot)v)(\lambda) = \mathcal{L}(\ct(\cdot)_{12}v)(\lambda)$ for all $v \in \dx$ and $\lambda$ large. The injectivity of the Laplace transformation then implies that $\ct(\cdot)_{12} = Q(\cdot)$.

Finally, the case of $0 \notin \rho(A_0)$ can be discussed as above, by rescaling arguments.
\end{proof}

\begin{corollary}
\label{cor:bounded-semigroups}
Under the Assumptions~\ref{assum:3.1.}, $A_0$ generates a bounded strongly continuous semigroup if and only if $\ca$ generates a bounded strongly continuous semigroup.
\end{corollary}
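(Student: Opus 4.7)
The plan is to extract the corollary directly from the block-triangular semigroup formula \eqref{eq:semigroup-formula} provided by Theorem~\ref{thm:ca-generator}. Since that theorem already gives an equivalence at the level of generator properties, everything reduces to comparing the uniform (in $t$) operator-norm bounds for $T(t)$ on $X$ with those for
\[
\ct(t) = \begin{pmatrix} T(t) & Q(t) \\ 0 & I_\dx \end{pmatrix}
\]
on $\cx$. I would split the argument along the two implications.

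For the direction (a) $\Rightarrow$ (b), the key observation is that the block structure of $\ct(t)$ makes $T(t)$ appear as a compression of $\ct(t)$. Indeed, for every $u \in X$ one has $\ct(t)\binom{u}{0} = \binom{T(t)u}{0}$, hence
\[
\|T(t)u\|_X = \bigg\|\ct(t)\binom{u}{0}\bigg\|_{\cx} \leq \|\ct(t)\|\,\|u\|_X,
\]
so $\sup_{t \geq 0}\|T(t)\| \leq \sup_{t \geq 0}\|\ct(t)\| < \infty$. This step is essentially cost-free.

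For the direction (b) $\Rightarrow$ (a), assume $M := \sup_{t \geq 0}\|T(t)\| < \infty$. I would first dispose of the case in which $A_0$ is invertible: in view of \eqref{eq:semigroup-explicit} we have $Q(t) = (I_X - T(t))\cd_0$ and therefore $\|Q(t)\| \leq (1+M)\|\cd_0\|$ uniformly in $t$. Since the diagonal blocks of $\ct(t)$ are $T(t)$ and $I_\dx$, both uniformly bounded, the bound on $Q(t)$ immediately yields $\sup_{t \geq 0}\|\ct(t)\|_{\cx} < \infty$. For the general case $0 \notin \rho(A_0)$ one should mimic the rescaling argument employed at the end of the proof of Theorem~\ref{thm:ca-generator}: pick $\mu > 0$ with $\mu \in \rho(A_0)$, and compare the data $(A, L)$ with $(A - \mu, L)$, for which the restricted operator $A_0 - \mu$ \emph{is} invertible. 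The explicit formula then applies to the rescaled operator matrix, and one transfers the resulting bound back to $\ct(t)$.

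The potentially delicate step is the last one: the rescaling that trivially adjusts the $X$-semigroup $T(t) \mapsto e^{-\mu t}T(t)$ does not amount to multiplying $\ct(t)$ by a scalar (the $(2,2)$-block stays $I_\dx$ and is not rescaled), so the reduction to the invertible case has to be done carefully via the Dirichlet operators $\cd_\mu$ rather than by a clean semigroup rescaling on $\cx$. This bookkeeping — matching $Q(t)$, built from $T(\cdot)$ and $\cd_\lambda$ for $\lambda \in \rho(A_0)$, to the corresponding $(I - e^{-\mu\cdot}T(\cdot))\cd_\mu$ obtained from the invertible case — is the only nontrivial ingredient; all other components have already been prepared in Theorem~\ref{thm:ca-generator} and Lemma~\ref{lem:3.3.}.
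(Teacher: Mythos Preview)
Your approach is essentially the paper's: reduce to the case of invertible $A_0$ and read off boundedness of both semigroups from the explicit formula \eqref{eq:semigroup-explicit}. The paper's proof is even terser --- it simply asserts ``assume, without loss of generality, $A_0$ to be invertible'' and invokes \eqref{eq:semigroup-explicit} --- so your explicit compression argument for (a)$\Rightarrow$(b) and your caution about the rescaling step are, if anything, more careful than the original.
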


\begin{proof}
Assume, without loss of generality, $A_0$ to be invertible. Hence the formula \eqref{eq:semigroup-explicit} yields the semigroup generated by $\ca$, and the claim follows.
\end{proof}

\begin{proposition}
\label{prop:analytic-semigroups}
Under the Assumptions~\ref{assum:3.1.}, $\ca$ generates a bounded analytic semigroup on $\cx$ if and only if $A_0$ generates a bounded analytic semigroup on $X$.
\end{proposition}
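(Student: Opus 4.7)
My plan is to use the sector-plus-uniform-bound characterization of bounded analytic semigroup generators, combined with the explicit resolvent formula \eqref{eq:resolvent-formula}. Recall: a closed densely defined operator $C$ generates a bounded analytic semigroup iff there is $\theta > \pi/2$ such that $\Sigma_\theta := \{\lambda \neq 0 : |\arg \lambda| < \theta\} \subseteq \rho(C)$ and $\sup_{\lambda \in \Sigma_\theta}\|\lambda R(\lambda, C)\| < \infty$. Since Theorem~\ref{thm:ca-generator} already gives the equivalence of $C_0$-semigroup generation for $\ca$ and $A_0$, only the sectorial resolvent estimate needs to be handled. As in Corollary~\ref{cor:bounded-semigroups}, I would first reduce to the case $0 \in \rho(A_0)$ by a rescaling argument, so that the Dirichlet operator $\cd_0$ is available and bounded.

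For the implication (a) $\Rightarrow$ (b), the block structure \eqref{eq:resolvent-formula} shows that the $(1,1)$-entry of $R(\lambda, \ca)$ is $R(\lambda, A_0)$ on $\rho(\ca) = \rho(A_0) \setminus \{0\}$, so $\|\lambda R(\lambda, A_0)\| \le \|\lambda R(\lambda, \ca)\|$ and the sector with uniform bound for $\ca$ directly yields the same for $A_0$ (with $0 \in \rho(A_0)$ by the reduction).

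For the converse (b) $\Rightarrow$ (a), I would bound each of the four entries of $\lambda R(\lambda, \ca)$ uniformly on a sector. The $(1,1)$-block $\lambda R(\lambda, A_0)$ and the $(2,2)$-entry $\lambda \cdot 1/\lambda = 1$ are immediate; the $(2,1)$-entry is trivially zero; the only substantive step is the off-diagonal entry $\cd_\lambda$. For this I would invoke Lemma~\ref{lem:3.2.} with $\mu = 0$ to write
\[
\cd_\lambda = (I_X - \lambda R(\lambda, A_0))\cd_0,
\]
which yields $\|\cd_\lambda\| \le (1 + \sup_{\lambda \in \Sigma_\theta}\|\lambda R(\lambda, A_0)\|)\,\|\cd_0\|$, finite by hypothesis. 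The main (and essentially only) non-routine obstacle is precisely this control of the coupling term $\cd_\lambda$, and the Dirichlet operator identity of Lemma~\ref{lem:3.2.} is the tool that removes it.
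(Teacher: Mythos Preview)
Your proposal is correct and follows essentially the same route as the paper's proof: both arguments rely on the explicit resolvent formula \eqref{eq:resolvent-formula} and control the only nontrivial entry $\cd_\lambda$ via the identity \eqref{eq:3.1} from Lemma~\ref{lem:3.2.}. The only cosmetic difference is that the paper invokes the equivalent characterization from \cite[Thm.~II.4.6]{[EN00]} (bounded $C_0$-semigroup plus the estimate $\|sR(r+is,\cdot)\|\le C$ for $r>0$, $s\neq 0$) rather than the sectorial bound $\sup_{\lambda\in\Sigma_\theta}\|\lambda R(\lambda,\cdot)\|<\infty$, and it keeps a generic $\mu\in\rho(A_0)$ in \eqref{eq:3.1} instead of first rescaling to $\mu=0$; your reduction to $0\in\rho(A_0)$ makes the off-diagonal estimate slightly cleaner.
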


This result may be shown by the same similarity and perturbation techniques of Proposition~\ref{prop:feedback-generator} below. However, the following more direct proof can also be used as suggested to us by the referee whom we thank for this remark.

\begin{proof}
Assume $A_0$ to generate a bounded analytic semigroup. By \cite[Thm.~II.4.6]{[EN00]}, this is equivalent to assume that (i) $A_0$ generates a bounded strongly continuous semigroup, and (ii) there exists a constant $C > 0$ such that
\begin{equation}
\label{eq:analytic-condition}
\Vert{s R(r+is, A_0)} \leq C
\end{equation}
for all $r > 0$ and $0 \neq s \in \mathbb{R}$.

It hence follows by Theorem~\ref{thm:ca-generator} and Corollary~\ref{cor:bounded-semigroups} that also $\ca$ generates a bounded strongly continuous semigroup. Moreover, since $\rho(\ca) = \rho(A_0) \setminus \{0\}$ as remarked in the proof of Theorem~\ref{thm:ca-generator}, the resolvent operator $R(r+is, \ca)$ is defined for $r > 0$ and $0 \neq s \in \mathbb{R}$. Taking into account \eqref{eq:3.1}, \eqref{eq:resolvent-formula}, and \eqref{eq:analytic-condition}, the resolvent of $\ca$ satisfies an estimate of the form
\begin{align*}
\Vert{s R(r+is, \ca)} &\leq \Vert{s R(r+is, A_0)} + \Vert{\cd_\mu} + \Vert{s R(r+is, A_0) \cd_\mu} + 1 \\
&\leq \mathcal{C} := C + \Vert{\cd_\mu} + C\Vert{\cd_\mu} + 1
\end{align*}
for all $r > 0$ and $0 \neq s \in \mathbb{R}$, and arbitrary $\mu \in \rho(A_0)$. It follows that also $\ca$ generates a bounded analytic semigroup. The converse implication can be proven likewise.
\end{proof}

An answer to the problem posed in Example~\ref{exa:1.2.} can now be given by simply using the variation of constants formula for semigroups (see \cite[§VI.7]{[EN00]}).

\begin{proposition}
\label{prop:inhomogeneous-solution}
Assume that the Assumptions~\ref{assum:3.1.} hold and that $A_0$ generates a strongly continuous semigroup, and take $f \in X$ and $g \in \dx$. If $\psi \in L^1(\pos, \dx)$, then the unique (mild, in the sense of \cite[Def.~VI.7.2]{[EN00]}) solution to the inhomogeneous problem \eqref{eq:iACP} is given by
\[
\cu : \pos \rightarrow \cx, \qquad \cu : t \mapsto \cu(t) := \ct(t) \binom{f}{g} + \int_0^t \ct(t-s) \binom{0}{\psi(s)}\,ds.
\]
In the particular case where $A_0$ is invertible, $\cu$ is given by
\begin{equation}
\label{eq:explicit-inhomogeneous}
\cu(t) = \binom{T(t)f + \cd_0 g - T(t)\cd_0 g + \int_0^t (\cd_0 \psi(s) - T(t-s)\cd_0 \psi(s))\,ds}{g + \int_0^t \psi(s)\,ds}.
\end{equation}
Moreover, if $f \in D(A)$, $f\big|_{\partial\Omega} = g$, and $\psi \in W^{1,1}(\pos, \dx)$, then $\cu$ is a classical solution to \eqref{eq:iACP}.
\end{proposition}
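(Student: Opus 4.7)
The plan is to invoke the variation of constants formula for inhomogeneous abstract Cauchy problems, applied to the generator $\ca$ constructed in the previous section. By Theorem~\ref{thm:ca-generator}, the hypothesis that $A_0$ generates a strongly continuous semigroup (together with Assumptions~\ref{assum:3.1.}) ensures that $\ca$ generates the strongly continuous semigroup $(\ct(t))_{t\geq 0}$ on $\cx$ with explicit form \eqref{eq:semigroup-formula}. Setting $F(t) := \binom{0}{\psi(t)}$, we have $F\in L^1(\pos,\cx)$ whenever $\psi\in L^1(\pos,\dx)$, so \eqref{eq:iACP} is precisely an inhomogeneous abstract Cauchy problem $\dot{\cu}(t) = \ca\,\cu(t) + F(t)$, $\cu(0) = \binom{f}{g}$, on the product space $\cx$. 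The standard variation of constants result \cite[Cor.~VI.7.6]{[EN00]} then gives existence and uniqueness of a mild solution, in exactly the form stated.

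For the explicit formula \eqref{eq:explicit-inhomogeneous} in the invertible case, I would simply substitute \eqref{eq:semigroup-explicit} into the general variation of constants expression. The second coordinate picks up $g$ from $\ct(t)\binom{f}{g}$ and $\psi(s)$ from each $\ct(t-s)\binom{0}{\psi(s)}$, yielding $g+\int_0^t\psi(s)\,ds$. The first coordinate aggregates $T(t)f + (I_X-T(t))\cd_0 g$ from the homogeneous part and $(I_X - T(t-s))\cd_0\psi(s)$ from each integrand of the convolution, giving exactly the formula claimed. This is purely a bookkeeping computation.

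For the classical-solution statement, I would appeal to \cite[Cor.~VI.7.8]{[EN00]}: when $\cu_0\in D(\ca)$ and $F\in W^{1,1}(\pos,\cx)$, the mild solution is in fact classical. The hypotheses $f\in D(A)$ and $f\big|_{\partial\Omega}=g$ (i.e., $Lf = g$) are precisely what is required to put $\binom{f}{g}$ into $D(\ca)$ by definition \eqref{eq:2.1}, and $\psi\in W^{1,1}(\pos,\dx)$ translates directly into $F\in W^{1,1}(\pos,\cx)$.

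The main obstacle, if any, is minor: one must verify that the product-space framework genuinely fits the hypotheses of the cited abstract results — essentially checking measurability and integrability of $F$ in $\cx$, which is immediate since $F$ lives in $\{0\}\times\dx$ and $\psi$ is assumed integrable. Apart from this, the proof reduces to a clean application of well-established semigroup theory combined with the structural properties of $\ct(t)$ already established in Theorem~\ref{thm:ca-generator}.
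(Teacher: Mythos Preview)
Your proposal is correct and matches the paper's approach exactly: the paper does not spell out a proof but introduces the proposition with the remark that it follows ``by simply using the variation of constants formula for semigroups (see \cite[\S VI.7]{[EN00]})'', which is precisely what you do --- invoke Theorem~\ref{thm:ca-generator} to obtain the generator $\ca$, apply the variation of constants formula to the inhomogeneity $F(t)=\binom{0}{\psi(t)}$, and read off the explicit form from \eqref{eq:semigroup-explicit}. Your identification of $Lf=g$ as the condition placing $\binom{f}{g}\in D(\ca)$ and your appeal to the $W^{1,1}$ regularity criterion for classical solutions are also exactly what is intended.
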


This result can now be applied to the inhomogeneous problem stated in Example~\ref{exa:1.2.}.

\begin{example}
\label{ex:heat-application}
We have to show that the setting introduced in Example~\ref{exa:1.2.} fits the hypotheses yielding the results presented in §§2--3.

Take as state and boundary space $X := L^2(\Omega)$ and $\dx := L^2(\partial\Omega)$, respectively. As operator $A$ we take the Laplacian
\[
A := \Delta \quad \text{with domain} \quad D(A) := \left\{ u \in H^{1/2}(\Omega) \cap H^2_{\text{loc}}(\Omega) : \Delta u \in L^2(\Omega) \right\}
\]
and as boundary operator $L$ the trace operator
\[
L u := u\big|_{\partial\Omega} \quad \text{with domain} \quad D(L) := D(A)
\]
as introduced in \cite[§§2.5-8]{[LM72]}. In particular, $A$ is defined on the maximal domain such that the traces of its elements exist as $L^2(\partial\Omega)$ functions. This is why $(A, D(A))$ can be called the \emph{maximal operator}.

With these definitions, \eqref{eq:iHE} takes the form \eqref{eq:iACP}. In order to apply Proposition~\ref{prop:inhomogeneous-solution} we have to show the well-posedness of \eqref{eq:AIBVP}. For this, it suffices, by Theorem~\ref{thm:2.8.}, that $\ca$ generates a strongly continuous semigroup on the product space $\cx$.

We first note that the operator $A_0$ is in this case nothing but the Dirichlet Laplacian $\Delta^D$, which, by standard results (see, e.g., \cite[Thm.~7.2.7]{[Pa83]}) generates an analytic semigroup on $X$. Surjectivity of $L$ follows from \cite[Thm.~2.7.4]{[LM72]}, see also \cite[Lemma~3.1]{[CENN01]}. Finally, the proof of the closedness of $\binom{A}{L}$ can be found in \cite[Lemma~3.2]{[CENN01]}.

The Dirichlet Laplacian on $L^2(\Omega)$ is invertible, and hence an explicit solution to \eqref{eq:iHE} is yielded by \eqref{eq:explicit-inhomogeneous}, where $(T(t))_{t \geq 0}$ is the heat semigroup and $\cd_0$ the operator that maps each $g \in D(A)$ into the unique harmonic function attaining boundary values $g$.

Summing up, the Assumptions~\ref{assum:3.1.} are satisfied and by Proposition~\ref{prop:analytic-semigroups} well-posedness of the homogeneous \eqref{eq:HE} is ensured. Existence and uniqueness of the solution to problem \eqref{eq:iHE} then follows from Proposition~\ref{prop:inhomogeneous-solution} under suitable assumptions on the inhomogeneous term $\psi$ (see \cite[§VI.7]{[EN00]}).
\end{example}

\begin{remark}
\label{rem:non-dissipative}
Although the Dirichlet Laplacian is dissipative, the operator $\ca$ is \emph{not} dissipative and the semigroup $\ttt$ generated by $\ca$ is not contractive on $\cx$ (but bounded indeed, by Remark~\ref{rem:factorization-remarks}). In \cite{[FGGR02]} it is shown how weighted norms and suitable feedbacks can be used in this context to obtain contractivity.
\end{remark}

\section{ABSTRACT BOUNDARY FEEDBACK SYSTEMS}
\label{sec:boundary-feedback}

We now return to the operator $\tilde{\ca}$ defined in \eqref{eq:2.2}. This means that we replace the inhomogeneous term $\psi$ in \eqref{eq:iHE} or in \eqref{eq:iACP} by a boundary feedback, i.e., we take
\[
\psi(t) := B u(t), \qquad t \geq 0,
\]
for some operator $B$ from $X$ into $\dx$. With this interpretation, we call \eqref{eq:AIBVP} an \emph{abstract boundary feedback system}. We combine the results obtained in the previous section and perturbation theory for semigroups. The simplest case in which $B$ is bounded follows directly from \cite[Thm. III.1.3]{[EN00]}.

\begin{proposition}
\label{prop:bounded-feedback}
Assume that $B : X \rightarrow \dx$ is bounded. Under the Assumptions~\ref{assum:3.1.} the operator $\tilde{\ca}$ generates a strongly continuous (respectively, analytic) semigroup on $\cx$ if and only if $A_0$ generates a strongly continuous (respectively, analytic) semigroup on $X$.
\end{proposition}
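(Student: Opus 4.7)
The key observation is that on the common domain $D(\tilde{\ca}) = D(\ca)$ we have the additive decomposition
\[
\tilde{\ca} = \ca + \mathcal{B}, \qquad \text{where} \qquad \mathcal{B} := \begin{pmatrix} 0 & 0 \\ B & 0 \end{pmatrix}.
\]
Because $B : X \to \dx$ is bounded on all of $X$, the matrix $\mathcal{B}$ is a bounded operator on the product space $\cx$. The whole statement therefore reduces to invariance of the class of (analytic) $C_0$-semigroup generators under a bounded additive perturbation, which is exactly the setup for the standard perturbation results cited in the hint.

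For the $C_0$-case I would chain two equivalences: Theorem~\ref{thm:ca-generator} gives that $A_0$ generates a $C_0$-semigroup on $X$ if and only if $\ca$ does on $\cx$, and the bounded perturbation theorem \cite[Thm.~III.1.3]{[EN00]} gives that $\ca$ generates a $C_0$-semigroup if and only if $\tilde{\ca} = \ca + \mathcal{B}$ does (the reverse direction using $\ca = \tilde{\ca} - \mathcal{B}$, which is again a bounded perturbation). Composing these two equivalences yields the claim.

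For the analytic case the skeleton is identical: I would use Proposition~\ref{prop:analytic-semigroups} in place of Theorem~\ref{thm:ca-generator}, and the corresponding perturbation statement for analytic semigroups (see \cite[Thm.~III.2.10]{[EN00]}) in place of \cite[Thm.~III.1.3]{[EN00]}. Since Proposition~\ref{prop:analytic-semigroups} is phrased for \emph{bounded} analytic semigroups while the present statement requires only analyticity, I would insert a preliminary rescaling, replacing $A_0$ and $\ca$ by $A_0 - \omega$ and $\ca - \omega$ for $\omega > 0$ sufficiently large; this is costless because the class of generators of analytic semigroups is invariant under translation. There is essentially no obstacle beyond spotting the decomposition $\tilde{\ca} = \ca + \mathcal{B}$; the only bookkeeping point is the bounded-analytic versus analytic distinction when invoking Proposition~\ref{prop:analytic-semigroups}.
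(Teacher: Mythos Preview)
Your proof is correct and follows exactly the route the paper intends: the paper gives no separate proof but simply remarks that the result ``follows directly from \cite[Thm.~III.1.3]{[EN00]}'', i.e., the bounded perturbation theorem applied to the decomposition $\tilde{\ca} = \ca + \mathcal{B}$ combined with Theorem~\ref{thm:ca-generator} (and Proposition~\ref{prop:analytic-semigroups} for the analytic case). Your added observation about rescaling to reconcile ``analytic'' with the ``bounded analytic'' hypothesis of Proposition~\ref{prop:analytic-semigroups} is a valid and necessary bookkeeping step that the paper leaves implicit.
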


The case of unbounded feedback operators defined on $D(A)$ can be treated by using the results of \cite{[En99]} on one-sided coupled operator matrices. We briefly sketch the main steps. First, define
\[
B_\lambda := B \cd_\lambda : \dx \rightarrow \dx
\]
for all $\lambda \in \rho(A_0)$ (observe that $B_\lambda$ is well defined since $R(\cd_\lambda) \subseteq D(A) \subseteq D(B)$). Then, the following lemma holds.

\begin{lemma}
\label{lem:feedback-factorization}
For $\lambda \in \rho(A_0)$ the operator $\tilde{\ca} - \lambda$ can be factorized as
\begin{equation}
\label{eq:feedback-factorization}
\tilde{\ca} - \lambda = \begin{pmatrix} A_0 - \lambda & 0 \\ B & B_\lambda - \lambda \end{pmatrix}
\begin{pmatrix} I_X & -\cd_\lambda \\ 0 & I_{\dx} \end{pmatrix}
\end{equation}
with $D(\tilde{\ca}) = \ch$ as defined in \eqref{eq:ch-def}.
\end{lemma}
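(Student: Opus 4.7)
The plan is to mirror the proof of Lemma~\ref{lem:domain-factorization} with only minor adjustments. The target identity involves the same right-hand factor $\crl$ as before, and the left-hand factor differs only by the addition of the $B$-entry in position $(2,1)$ and the replacement of $0$ by $B_\lambda-\lambda$ in position $(2,2)$. Since the right factor and its range are unchanged, essentially the same bookkeeping will go through.

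First I would dispatch the domain identity. By the definition \eqref{eq:2.2}, $D(\tilde\ca)=D(\ca)$, so Lemma~\ref{lem:domain-factorization} already gives $D(\tilde\ca)=\ch$. For the product on the right of \eqref{eq:feedback-factorization}, the domain of the left factor is $D(A_0)\times\dx$ (the bounded entries $B_\lambda-\lambda$ impose no restriction, and $D(A_0)\subseteq D(A)\subseteq D(B)$ absorbs the $B$ constraint), exactly as for $\ca_\lambda$. Hence $\binom{u}{v}$ lies in the domain of the product iff $\crl\binom{u}{v}=\binom{u-\cd_\lambda v}{v}\in D(A_0)\times\dx$, i.e. iff $u-\cd_\lambda v\in D(A_0)$, which is the definition of $\ch$.

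Next, I would check the operator identity by direct computation on $\binom{u}{v}\in\ch$. Applying $\crl$ followed by the left factor yields
\begin{equation*}
\binom{(A_0-\lambda)(u-\cd_\lambda v)}{B(u-\cd_\lambda v)+(B_\lambda-\lambda)v}.
\end{equation*}
For the top entry, the membership $u-\cd_\lambda v\in\ker(L)$ gives $A_0(u-\cd_\lambda v)=A(u-\cd_\lambda v)$, and $\cd_\lambda v\in\ker(\lambda-A)$ gives $A\cd_\lambda v=\lambda\cd_\lambda v$; together these collapse the top entry to $Au-\lambda u$. For the bottom entry, the very definition $B_\lambda:=B\cd_\lambda$ produces the cancellation $B(u-\cd_\lambda v)+(B_\lambda-\lambda)v=Bu-B_\lambda v+B_\lambda v-\lambda v=Bu-\lambda v$. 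Together these read $(\tilde\ca-\lambda)\binom{u}{v}$, proving \eqref{eq:feedback-factorization}.

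I do not expect any genuine obstacle; the whole argument is a direct verification. The only content beyond Lemma~\ref{lem:domain-factorization} is the observation that defining $B_\lambda$ as $B\cd_\lambda$ is precisely what is needed to push the perturbation $B$ out of the top row of the factorized form and absorb it into the corrected diagonal entry $B_\lambda-\lambda$ of the bottom row — this is exactly the one-sided coupling structure referenced in Remark~\ref{rem:factorization-remarks}.
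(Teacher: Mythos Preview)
Your proof is correct. The domain identity is handled exactly as needed, and your direct computation of the two components is clean; the cancellation $B(u-\cd_\lambda v)+B_\lambda v = Bu$ is precisely the point.

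The paper takes a slightly different, more algebraic route: rather than re-doing the entrywise computation, it starts from the already-established factorization $\ca-\lambda=\ca_\lambda\crl$ of Lemma~\ref{lem:domain-factorization}, writes $\tilde\ca-\lambda=\ca_\lambda\crl+\left(\begin{smallmatrix}0&0\\B&0\end{smallmatrix}\right)$, and then pulls the bounded invertible factor $\crl$ out on the right via $\left(\begin{smallmatrix}0&0\\B&0\end{smallmatrix}\right)=\left(\begin{smallmatrix}0&0\\B&0\end{smallmatrix}\right)\crl^{-1}\,\crl=\left(\begin{smallmatrix}0&0\\B&B_\lambda\end{smallmatrix}\right)\crl$. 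This yields \eqref{eq:feedback-factorization} by pure matrix manipulation. Your approach is a self-contained direct verification mirroring the earlier lemma; the paper's approach reuses the earlier result and makes the one-sided-coupling structure more visible. Both are equally short and valid.
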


\begin{proof}
Taking into account the identity \eqref{eq:factorization}, a matrix computation yields
\begin{align*}
\tilde{\ca} - \lambda &= \ca_\lambda \crl + \begin{pmatrix} 0 & 0 \\ B & 0 \end{pmatrix}
= \left[ \ca_\lambda + \begin{pmatrix} 0 & 0 \\ B & 0 \end{pmatrix} \crl^{-1} \right] \crl \\
&= \left[ \begin{pmatrix} A_0 - \lambda & 0 \\ 0 & -\lambda \end{pmatrix} + \begin{pmatrix} 0 & 0 \\ B & B_\lambda \end{pmatrix} \right] \crl
= \begin{pmatrix} A_0 - \lambda & 0 \\ B & B_\lambda - \lambda \end{pmatrix}
\begin{pmatrix} I_X & -\cd_\lambda \\ 0 & I_{\dx} \end{pmatrix}.
\end{align*}
Note that the domain of $\tilde{\ca} - \lambda$ coincides with $\ch$ from \eqref{eq:ch-def}.
\end{proof}

\begin{proposition}
\label{prop:feedback-generator}
Let the Assumptions~\ref{assum:3.1.} hold, and assume that $B$ is relatively $A_0$-bounded, and that $B_{\lambda_0}$ is bounded for some $\lambda_0 \in \rho(A_0)$. Then the operator $\tilde{\ca}$ generates a strongly continuous (resp., analytic) semigroup on $\cx$ if and only if the operator $A_0 - \cd_{\lambda_0} B$, defined on $D(A_0)$, generates a strongly continuous (resp., analytic) semigroup on $X$.
\end{proposition}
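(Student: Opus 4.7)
The plan is to exploit the factorization from Lemma~\ref{lem:feedback-factorization} through a bounded similarity transformation, then peel off a bounded perturbation, and finally reduce the resulting lower-triangular operator matrix to its diagonal via results on one-sided coupled operator matrices.

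\textbf{Step 1 (Similarity).} The operator $\crl_{\lambda_0} = \begin{pmatrix} I_X & -\cd_{\lambda_0} \\ 0 & I_{\dx} \end{pmatrix}$ is a bounded isomorphism of $\cx$ (since $\cd_{\lambda_0}$ is bounded by Lemma~\ref{lem:3.2.}) with inverse obtained by switching the sign of $\cd_{\lambda_0}$, and it maps $D(\tilde\ca) = \ch$ bijectively onto $D(A_0) \times \dx$. Using \eqref{eq:feedback-factorization} together with $A\cd_{\lambda_0} = \lambda_0 \cd_{\lambda_0}$, a direct computation yields
\[
\hat M := \crl_{\lambda_0} \tilde\ca \crl_{\lambda_0}^{-1} = \begin{pmatrix} A_0 - \cd_{\lambda_0} B & \cd_{\lambda_0}(\lambda_0 - B_{\lambda_0}) \\ B & B_{\lambda_0} \end{pmatrix}
\]
with natural domain $D(A_0) \times \dx$. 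Since similarity by a bounded isomorphism preserves both strong continuity and analyticity of the generated semigroup, it is enough to characterize generation of $\hat M$.

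\textbf{Step 2 (Bounded perturbation).} Decompose $\hat M = M_1 + P$, where
\[
M_1 := \begin{pmatrix} A_0 - \cd_{\lambda_0} B & 0 \\ B & 0 \end{pmatrix}, \qquad P := \begin{pmatrix} 0 & \cd_{\lambda_0}(\lambda_0 - B_{\lambda_0}) \\ 0 & B_{\lambda_0} \end{pmatrix}.
\]
The perturbation $P$ is bounded on $\cx$ because both $\cd_{\lambda_0}$ and $B_{\lambda_0}$ are bounded. By \cite[Thm.~III.1.3]{[EN00]} (respectively, the analytic analogue \cite[Cor.~III.2.16]{[EN00]}), $\hat M$ generates a strongly continuous (resp.~analytic) semigroup on $\cx$ if and only if $M_1$ does.

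\textbf{Step 3 (Triangular reduction).} The operator $M_1$ on $D(A_0) \times \dx$ is lower triangular with diagonal entries $A_0 - \cd_{\lambda_0} B$ on $X$ and $0$ on $\dx$. Since $B$ is relatively $A_0$-bounded and $\cd_{\lambda_0}$ is bounded, the operator $\cd_{\lambda_0} B$ is $A_0$-bounded, so the graph norms of $A_0$ and $A_0 - \cd_{\lambda_0} B$ are equivalent on $D(A_0)$ (after a harmless rescaling absorbing the relative bound) and $B$ is also $(A_0-\cd_{\lambda_0}B)$-bounded. Now apply the generation theorem for one-sided coupled operator matrices from \cite{[En99]}: such a triangular matrix generates a strongly continuous (resp.~analytic) semigroup on $\cx$ if and only if each of its diagonal entries generates on its respective space. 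The zero operator on $\dx$ trivially generates the identity (in particular, an analytic) semigroup, so the condition reduces to the generation property of $A_0 - \cd_{\lambda_0} B$ on $X$.

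The main obstacle is Step 3, since the off-diagonal coupling $B$ is unbounded. Should one prefer to avoid invoking \cite{[En99]}, the equivalence can be established directly: for the forward implication one verifies that
\[
\begin{pmatrix} S(t) & 0 \\ \int_0^t B S(s)\,ds & I_\dx \end{pmatrix}, \qquad S(t) = e^{t(A_0 - \cd_{\lambda_0} B)},
\]
is a well-defined strongly continuous semigroup on $\cx$ generated by $M_1$, using relative $A_0$-boundedness of $B$ (and, in the analytic case, the smoothing estimate $\|A_0 S(s)\| \le M/s$) to give sense to the lower-left entry; conversely, if $M_1$ generates a semigroup $\mathcal{S}(t)$, then $\pi_1 \mathcal{S}(t)(\cdot,0)^{T}$ is a strongly continuous semigroup on $X$ whose generator is easily identified with $A_0 - \cd_{\lambda_0} B$.
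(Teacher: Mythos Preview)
Your argument is correct and follows essentially the same strategy as the paper: both reduce $\tilde\ca$, via the bounded similarity $\mathcal R_{\lambda_0}$ and a bounded perturbation, to the lower-triangular matrix $\begin{pmatrix} A_0-\cd_{\lambda_0}B & 0 \\ B & 0 \end{pmatrix}$ on the diagonal domain $D(A_0)\times\dx$, then invoke standard operator-matrix results (the paper cites \cite{[Na89]} rather than \cite{[En99]}). The only difference is the order of operations---you conjugate first and perturb once, while the paper perturbs, conjugates, and perturbs again---but the endpoint and the ideas are identical.
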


\begin{proof}
By \eqref{eq:feedback-factorization}, we have
\[
\tilde{\ca} = \begin{pmatrix} A_0 & 0 \\ B & B_{\lambda_0} \end{pmatrix} \mathcal R_{\lambda_0}
+ \lambda_0 \begin{pmatrix} 0 & \cd_{\lambda_0} \\ 0 & 0 \end{pmatrix} =: \tilde{\ca}_{\lambda_0} \mathcal R_{\lambda_0} + \lambda_0 \cp_{\lambda_0}.
\]
By Lemma~\ref{lem:3.2.}, $\cp_{\lambda_0}$ is a bounded perturbation, so it suffices to show that $\tilde{\ca}_{\lambda_0} \mathcal R_{\lambda_0}$ generates a strongly continuous (resp., analytic) semigroup. Due to the invertibility of $\mathcal R_{\lambda_0}$, $\tilde{\ca}_{\lambda_0} \mathcal R_{\lambda_0}$ is similar to
\[
\mathcal R_{\lambda_0} \tilde{\ca}_{\lambda_0} = \begin{pmatrix} A_0 - \cd_{\lambda_0} B & -\cd_{\lambda_0} B_{\lambda_0} \\ B & B_{\lambda_0} \end{pmatrix}
= \begin{pmatrix} A_0 - \cd_{\lambda_0} B & 0 \\ B & 0 \end{pmatrix} + \begin{pmatrix} 0 & -\cd_{\lambda_0} B_{\lambda_0} \\ 0 & B_{\lambda_0} \end{pmatrix} =: \mathcal{M} + \mathcal{N}.
\]
Since $B_{\lambda_0}$ is bounded, also $-\cd_{\lambda_0} B_{\lambda_0}$ and $\mathcal{N}$ are bounded. Hence, by bounded perturbation and similarity arguments, we can conclude that $\tilde{\ca}$ is a generator if and only if the operator matrix $\mathcal{M}$ is a generator.

Observe further that $B$ is also $(A_0 - \cd_{\lambda_0} B)$-bounded and while the operator matrix $\tilde{\ca}$ does not have diagonal domain, $D(\mathcal{M}) = D(\tilde{\ca}_{\lambda_0}) = D(A_0) \times \dx$: hence, by well-known results on matrices with diagonal domain (see, e.g., \cite[Cor.~3.2 and Cor.~3.3]{[Na89]}) one can see that $\mathcal{M}$ generates a strongly continuous (respectively, analytic) semigroup on $\cx$ if and only if $A_0 - \cd_{\lambda_0} B$ does the same on $X$.
\end{proof}

\begin{remark}
\label{rem:relative-boundedness}
Observe that, in particular, if $B$ is relatively $\binom{A}{L}$-bounded, then $B$ is relatively $A_0$-bounded and $B_{\lambda_0}$ is bounded for all $\lambda_0 \in \rho(A_0)$. Indeed, $\Vert{u}_{\binom{A}{L}} = \Vert{u}_A$ for all $u \in D(A_0)$, and hence $B$ is relatively $A_0$-bounded.

Further, the closedness of $\binom{A}{L} : D(A) \rightarrow \cx$ implies the closedness of the operator matrix
\[
\mathcal{L} := \begin{pmatrix} A & 0 \\ L & 0 \end{pmatrix}, \qquad D(\mathcal{L}) := D(A) \times \dx,
\]
on $\cx$, and hence we can define the Banach space $\mathcal{Y} := (D(\mathcal{L}), \Vert{\cdot}_{\mathcal{L}})$, with $\mathcal{Y} \hookrightarrow \cx$. Now observe that, by virtue of Lemma~\ref{lem:3.2.}, the operator
\[
\mathcal{H}_\lambda := \begin{pmatrix} 0 & \cd_\lambda \\ 0 & 0 \end{pmatrix} : \cx \rightarrow \cx
\]
is bounded from $\cx$ to $\cx$ and its range is contained in $\mathcal{Y}$ for all $\lambda \in \rho(A_0)$. It then follows by \cite[Cor.~B.7]{[EN00]} that $\mathcal{H}_\lambda$ is also bounded from $\cx$ to $\mathcal{Y}$, and since under the above assumptions
\[
\mathcal{B} := \begin{pmatrix} 0 & 0 \\ B & 0 \end{pmatrix}, \qquad D(\mathcal{B}) := D(A) \times \dx
\]
is bounded from $\mathcal{Y}$ to $\cx$,
\[
\mathcal{B} \mathcal{H}_\lambda = \begin{pmatrix} 0 & 0 \\ 0 & B_\lambda \end{pmatrix} : \cx \to \cx
\] 
is bounded, and the claim follows.
\end{remark}

\begin{example}[Diffusion-transport equation with dynamical boundary conditions]
\label{ex:diffusion-transport}
We consider the diffusion-transport equation with dynamical boundary conditions
\begin{equation}
\label{eq:DT}
\begin{cases}
\dot{u}(t,x) = u''(t,x) + k u'(t,x), & t \geq 0,\; x \in (0,1), \\
\dot{u}(t,0) = u'(t,0) + c u(t,0), & t \geq 0, \\
\dot{u}(t,1) = -u'(t,1) + d u(t,1), & t \geq 0, \\
u(0,x) = f(x), & x \in (0,1), \\
u(0,0) = a, \quad u(0,1) = b,
\end{cases}
\end{equation}
where $a,b,c,d \in \mathbb{C}$, $k \geq 0$. This system becomes an \eqref{eq:AIBVP} if we consider the state space $X := L^2(0,1)$, the boundary space $\dx := \mathbb{C}^2$, the operator
\[
A u := u'' + k u', \qquad D(A) := H^2(0,1),
\]
the boundary operator
\[
L u := \binom{u(0)}{u(1)}, \qquad D(L) := D(A),
\]
and the feedback operator
\[
B u := \binom{u'(0) + c u(0)}{-u'(1) + d u(1)}, \qquad D(B) := D(A).
\]

We first show that the Assumptions (G$_1$)--(G$_3$) hold. Observe that $L$ is surjective onto $\mathbb{C}^2$. The operator $A_0$, defined as the restriction of $A$ to $D(A_0) := H^2(0,1) \cap H^1_0(0,1)$, generates an analytic semigroup and, in particular, (G$_1$) is satisfied. Finally, one can show as in \cite[Lemma~3.3]{[CENN01]} that the operator $\mathcal{L} := \binom{\frac{d^2}{dx^2}}{L}$ is closed on $D(A)$. Since $\binom{k\frac{d}{dx}}{0}$ is relatively $\mathcal{L}$-bounded with $\mathcal{L}$-bound 0 for $k \geq 0$, their sum $\binom{A}{L}$ is also closed (see \cite[Lemma~III.2.4]{[EN00]}). Hence, the Assumptions~\ref{assum:3.1.} are satisfied and we are now in the position to apply Proposition~\ref{prop:feedback-generator}.

Observe that $0 \in \rho(A_0)$, and the Dirichlet operator $\cd_0$ maps, by definition, each pair $(\alpha,\beta) \in \mathbb{C}^2$ into the unique solution of the ordinary differential equation
\[
\begin{cases}
u''(x) + k u'(x) = 0, & x \in (0,1), \\
u(0) = \alpha, \quad u(1) = \beta.
\end{cases}
\]

In order to show that Proposition~\ref{prop:feedback-generator} applies, we now check that $B$ is relatively $\binom{A}{L}$-bounded. This will then yield, by virtue of Remark~\ref{rem:relative-boundedness}, that $B$ is relatively $A_0$-bounded, and that $B_\lambda$ is bounded for all $\lambda \in \rho(A_0)$. Recall that the first derivative on $L^2(0,1)$ is relatively bounded by the second derivative, with relative bound 0 (see, e.g., \cite[§III.2]{[EN00]}), and hence that the graph norm of the second derivative and the graph norm of $A$ are equivalent. It then follows from the embedding $H^1(0,1) \hookrightarrow C([0,1])$ that we can find suitable constants $\xi$, $\xi_1$, $\xi_2$ such that
\begin{align*}
\Vert{B u} &= \left( |u'(0) + c u(0)|^2 + |u'(1) - d u(1)|^2 \right)^{1/2} \\
&\leq \zeta \left( \Vert{u'}_{C([0,1])} + |L u|_{\mathbb{C}^2} \right) \\
&\leq \xi \left( \Vert{u''}_{L^2(0,1)} + \Vert{u'}_{L^2(0,1)} \right) + \zeta |L u|_{\mathbb{C}^2} \\
&\leq \xi_1 \Vert{u''}_{L^2(0,1)} + \xi_2 \Vert{u}_{L^2(0,1)} + \zeta |L u|_{\mathbb{C}^2} \leq \eta \Vert{u}_{\binom{A}{L}},
\end{align*}
for all $u \in D(A)$, where $\zeta = 2\max\{1, |c|, |d|\}$ and $\eta = \max\{\xi_1, \xi_2, \zeta\}$. Hence, $B$ is relatively $\binom{A}{L}$-bounded.

We finally show that $A_0 - \cd_0 B$ generates an analytic semigroup. The boundedness of $B$ from $(D(A_0), \Vert{\cdot}_{A_0})$ to $\mathbb{C}^2$ shows that $B$, and hence $\cd_0 B$ is relatively $A_0$-compact. It then follows from perturbation theory (see \cite[Cor.~III.2.17]{[EN00]}) that $A_0 - \cd_0 B$, defined on $D(A_0)$, is the generator of an analytic semigroup.

Therefore, the operator
\[
\ca := \begin{pmatrix} A & 0 \\ B & 0 \end{pmatrix}, \qquad D(\ca) := \left\{ \binom{u}{L u} : u \in D(A) \right\}
\] 
on $\cx := X \times \dx$ generates an analytic semigroup by Proposition~\ref{prop:feedback-generator}, and the diffusion-transport equation \eqref{eq:DT} is well-posed.
\end{example}

\begin{remark}
\label{rem:stability}
A question that naturally arises is how the stability of such a system is affected by the feedback operator. Under certain assumptions on the parameters $c$ and $d$ the semigroup generated by $\ca$ is positive, and it can be shown that \emph{stability of our boundary feedback system is independent of the internal diffusion}. This means that the spectral bound of $\ca$ is negative if and only if the spectral bound of $B_0$ is negative. This allows to apply Liapunov-type stability results (see \cite[Chap.~V]{[EN00]}), but we refer the reader to \cite[§§7--9]{[KMN03]} for details.
\end{remark}

\end{document}